\title[A restriction estimate]{Fourier restriction estimates for surfaces of co-dimension two in $\R^5$}
\author{Shaoming Guo and Changkeun Oh}
\date{}
\def\R{\mathbb{R}}
\def\C{\mathbb{C}}
\def\nint{\mathop{\diagup\kern-13.0pt\int}}
\def\Z{\mathbb{Z}}
\def\S{\mathcal{S}}
\def\beq{\begin{equation}}
\def\endeq{\end{equation}}
\def\bg{\begin{gathered}}
\def\eg{\end{gathered}}
\def\mc{\mathcal}
\def\lesim{\lesssim}
\def\rank{\text{rank}}
\newcommand{\norm}[1]{ \left|  #1 \right| }
\newcommand{\Norm}[1]{ \left\|  #1 \right\| }
\newcommand{\Rone}{\text{R}_1}
\newcommand{\Rzero}{\text{R}_0}
\newcommand{\Rtwo}{\text{R}_2}
\newcommand{\Rd}{\text{R}_{d'}}
\def\P{\mathcal{P}}
\def\L{\mathbb{L}}
\def\dec{\text{Dec}}
\numberwithin{equation}{section}
\theoremstyle{plain}
\newtheorem{thm}{Theorem}[section]
\newtheorem{prop}[thm]{Proposition}
\newtheorem{lem}[thm]{Lemma}
\newtheorem{defi}[thm]{Definition}
\newtheorem{rem}[thm]{Remark}
\newtheorem*{conj*}{Conjecture}
\newtheorem*{openproblem*}{Open Problem}
\begin{document}
\maketitle

\begin{abstract}
   We prove $L^p \rightarrow L^q$ Fourier restriction estimates for 3-dimensional quadratic surfaces in $\R^5$.  Our results are sharp, up to endpoints, for a few classes of surfaces. 
\end{abstract}

\section{Introduction}

Let $d\ge 2$ and $P(\xi), Q(\xi)$ be two real quadratic forms depending on $\xi\in \R^d$. Denote by $P$ and $Q$ the Hessian matrices of $P(\xi)$ and $Q(\xi)$. We say that the pair $(P, Q)$ is irreducible if $P$ and $Q$ are linearly independent and there does not exist a linear change of coordinates in $\xi$ after which both $P(\xi)$ and $Q(\xi)$ omit a common variable. Otherwise, we say that $(P, Q)$ is reducible. 

Consider the $d$-dimensional surface
\begin{equation}\label{0822_surface}
    S=\{(\xi,P(\xi),Q(\xi)): \xi \in [-1,1]^d \}
\end{equation}in $\R^{d+2}$. Let $E^Sf$ denote the extension operator associated to $S$ defined by
\begin{equation}\label{0823equ_extension}
    E^Sf(x',x_{d+1},x_{d+2}):=\int_{[-1,1]^d}f(\xi)e(\xi \cdot x'+P(\xi)x_{d+1}+Q(\xi)x_{d+2})\,d\xi.
\end{equation}
Here, $x'=(x_1,\ldots,x_d)$ and $e(t)=e^{2\pi it}$.

We are interested in the following Fourier restriction problem: Find all the pairs $(p,q)$ such that 
\begin{equation}\label{restrictionestimate}
    \|E^Sf\|_{L^q(\R^{d+2})} \leq C_{p,q,S, d}\|f\|_{L^p(\R^d)}
\end{equation}
for every function $f$. The main result of the paper is as follows. 

\begin{thm}\label{0822main_thm}
Let $d=3$. For every pair $(P, Q)$ that is irreducible, the restriction estimate \eqref{restrictionestimate} holds with $q>4$ and $p>q/(q-3)$. 
\end{thm}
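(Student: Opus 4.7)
I would attack this via a Bourgain--Guth broad/narrow decomposition combined with a bilinear restriction estimate for transversal caps. After standard $\epsilon$-removal reductions it suffices to prove \eqref{restrictionestimate} at the critical scaling line $1/p + 3/q = 1$ with $q > 4$; the full range $q > 4$, $p > q/(q-3)$ follows from this by interpolation with the trivial $L^{p'} \to L^\infty$ bound (valid for any $p' > 1$ since $[-1,1]^3$ has finite measure), which gives a family of target exponents $(p_\theta, q_\theta)$ with $q_\theta > q$ and $1/p_\theta + 3/q_\theta < 1$ exhausting the claimed region.

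The central ingredient is a bilinear inequality
\begin{equation*}
\|E^S_{\tau_1} f_1 \cdot E^S_{\tau_2} f_2\|_{L^{q/2}(B_R)} \lesssim R^{\epsilon}\, \|f_1\|_{L^p} \|f_2\|_{L^p}
\end{equation*}
for caps $\tau_i$ with base points $\xi_i$ that are \emph{transversal in the codimension-two sense}: the six tangent vectors to $S$ at $\xi_1$ and $\xi_2$ span $\R^5$. A direct computation shows this is equivalent to the linear independence in $\R^3$ of $Pa$ and $Qa$, where $a = \xi_1 - \xi_2$ and $P, Q$ are the Hessian matrices. Under this hypothesis, Plancherel applied to the sum map $(\xi,\eta) \mapsto (\xi+\eta, P(\xi)+P(\eta), Q(\xi)+Q(\eta))$ yields an $L^2$ bilinear bound with non-degenerate Jacobian, and interpolation with the trivial $L^\infty$ bound gives the desired $L^{q/2}$ bilinear inequality.

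The narrow case is the main obstacle and the only step where the irreducibility hypothesis enters. When no pair of caps in the Fourier support of $f$ is transversal, the support must concentrate near the subvariety $V = \{a \in \R^3 : Pa \text{ and } Qa \text{ are linearly dependent}\}$. Without irreducibility $V$ could be all of $\R^3$ (for instance if $P$ and $Q$ share a common missing variable), and no gain is possible. Irreducibility forces $V$ to be a proper algebraic subvariety of dimension at most two, reducing the narrow contribution to a restriction estimate for a $2$-dimensional quadratic surface in $\R^5$, which one dispatches either by induction on dimension or by appealing to known lower-dimensional restriction estimates.

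The hardest technical step is orchestrating the scale induction so that the broad and narrow contributions combine to land exactly on the critical line $1/p + 3/q = 1$ and not on a strictly worse one. After each narrow step one must perform a parabolic rescaling to return $(P,Q)$ to a normalized form, and verify that irreducibility is preserved under this rescaling so the induction closes. Controlling the accumulated $R^\epsilon$-losses over the iteration, and ensuring the narrow contributions summed over all scales do not push the final exponent across the critical line, is where the bulk of the technical work resides.
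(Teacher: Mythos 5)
Your framework (broad--narrow decomposition, bilinear $L^4$ estimate via Plancherel after the sum-map change of variables, rescaling of the narrow piece, induction on the radius) is exactly how the paper handles the easiest irreducible class, namely $(P,Q)\equiv(\xi_1\xi_2,\xi_1\xi_3)$, where the degenerate set $V=\{a:Pa,Qa \text{ lin.\ dep.}\}$ is the hyperplane $\{a_1=0\}$ and the narrow slab rescales anisotropically back to the same surface. Your identification of bilinear transversality with the linear independence of $Pa$ and $Qa$ is also correct. The gap is in the narrow case for the other classes, which you treat uniformly but which do not behave uniformly. For the surfaces failing the (D) condition --- the paper's hardest case, $(\xi_1\xi_2+\xi_3^2,\xi_1^2)$ --- one computes $V=\{a_1=0\}$, so the narrow piece is a slab $\{|\xi_1-c|\le 1/K\}$ on which the surface degenerates to a cylinder over the parabola $(\xi_3,\xi_3^2)$; this is \emph{not} a $2$-dimensional quadratic surface whose known restriction theory closes an induction at $q>4$, and the slab does not rescale back to the original surface. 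The paper's resolution is a genuinely two-parameter nested induction: a quantity $D_2(M,N,R)$ tracking simultaneous localization in $\xi_1$ and $\xi_3$, a bilinear estimate whose constant gains a factor $(l_1l_2)^{1-4/p}$ in the cap dimensions (Lemma \ref{200901lem7.4} --- note its Jacobian is $|\xi_1-\xi_1'||\xi_3-\xi_3'|$, so mere linear independence of $Pa,Qa$, i.e.\ separation in $\xi_1$ alone, does not yield the quantitative bound you assert), and as base case a restriction estimate for $S_6=(\xi_1,\xi_2,\xi_1^2,\xi_1\xi_2)\subset\R^4$ valid only for $q_1>5$ (Proposition \ref{lowerdim}). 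The exponent bookkeeping that lets a $q_1>5$ input for $S_6$ still produce $q>4$ for $S$ is precisely the step you flag as "the hardest technical step" and leave unresolved; it is the main content of the proof, not an orchestration detail.

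A second, smaller gap: for surfaces satisfying the (CM) condition, $V$ is a quadric or cubic cone rather than a hyperplane, so "the support concentrates near a translate of $V$" is not a slab and cannot be dispatched by a lower-dimensional restriction estimate for an exact $2$-surface. The paper instead organizes the clustering via a polynomial-variety dichotomy (Lemma \ref{200906lem7.3}), handles the broad part with multilinear Brascamp--Lieb rather than bilinear estimates, and handles the narrow part with \emph{decoupling} for the $2$-dimensional quadratic surfaces obtained by restricting to hyperplanes, using the $(\Rtwo)$ condition to guarantee the non-degeneracy required by Bourgain--Demeter. Some substitute for this decoupling input (to pass from a neighbourhood of the variety to the individual caps with an acceptable loss) is missing from your plan. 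In short: your proposal is the right skeleton for one of the four cases, but the case analysis forced by Theorem \ref{thm_classification_of_surfaces} and, above all, the nested induction for $(\xi_1\xi_2+\xi_3^2,\xi_1^2)$ are essential and absent.
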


It is a coincidence that our methods in the current paper produced the same range of $(p, q)$ for every irreducible pair $(P, Q)$. Indeed, within the class of all irreducible pairs, there are a few smaller classes of pairs which admit very different Fourier restriction phenomenon from each other. Moreover, we will also see that the proofs of \eqref{restrictionestimate} for different classes of pairs of $(P, Q)$ also differ significantly.

We proceed with a classification of irreducible pairs $(P, Q)$. We say that two pairs of quadratic forms $(P, Q)$ and $(P', Q')$ are equivalent and denote by $(P,Q) \equiv (P',Q')$ if there exist two invertible real matrices $M_1\in M_{3\times 3}$ and $M_2\in M_{2\times 2}$ such that 
\begin{equation}
    (P'(\xi), Q'(\xi))^T=M_2\cdot (P(M_1\cdot \xi), Q(M_1\cdot \xi))^T, \text{ for every } \xi\in \R^3.
\end{equation}
We say that the pair $(P, Q)$ satisfies the (CM) condition if 
\begin{equation}\label{curvature_zz}
    \int_{S^1}|\det{(x_1P+x_2Q )}|^{-\gamma}\,d\sigma(x_1,x_2) < \infty\;\; \text{for every} \;\; 0<\gamma<2/3,
\end{equation}
where $S^1$ denotes the unit circle on $\R^2$ and $d\sigma$ the arc-length measure on $S^1$. This condition was introduced by Christ \cite{christthesis, MR766216} and Mockenhaupt \cite{Mockenhaupt} where they proved that the pair $(P, Q)$ satisfies the (CM) condition if and only if \eqref{restrictionestimate} holds for $p=2, q=14/3$. We say that the pair $(P, Q)$ satisfies the (D) condition if 
\begin{equation}\label{equ_determinant_condition}
    \det [\nabla P(\xi), \nabla Q(\xi), w]\not\equiv 0,
\end{equation}
for any non-zero vector $w\in \R^3$. In other words, the left hand side of \eqref{equ_determinant_condition}, when viewed as a polynomial in $\xi$, does not vanish identically. For $0\le d'\le 2$, we say that the pair $(P, Q)$ satisfies the $(\Rd)$ condition if 
\begin{equation}\label{equ_rank_condition}
    \min_{\text{ hyperplane } H\subset \R^3} \max_{\lambda_1, \lambda_2\in \R} \rank\Big( (\lambda_1 P(\cdot)+\lambda_2 Q(\cdot))|_{H}\Big)=d'. 
\end{equation}
The (D) condition and the $(\Rd)$ condition were introduced in \cite{MR3945730} and \cite{arxiv:1902.03450} to study decoupling inequalities for surfaces of co-dimension two. 

\begin{thm}\label{thm_classification_of_surfaces}
\hfill
\begin{itemize}
\item[(1)] The (CM) condition is equivalent to the (D) condition together with the $(\Rtwo)$ condition. 
\item[(2)] If $(P, Q)$ fails the (D) condition, then it is equivalent to $(\xi_1^2\pm\xi_2^2, \xi_3^2)$ or $(\xi_1\xi_2+\xi_3^2, \xi_1^2)$. 
\item[(3)] If $(P, Q)$ satisfies the (D) condition and the $(\Rone)$ condition, then it is equivalent to $(\xi_1 \xi_2, \xi_1\xi_3+\xi_2^2)$ or $(\xi_1\xi_2, \xi_1^2\pm\xi_3^2)$. 
\item[(4)] If $(P, Q)$ satisfies the (D) condition and the $(\Rzero)$ condition, then it is equivalent to $(\xi_1 \xi_2, \xi_1\xi_3)$. 
\end{itemize}
\end{thm}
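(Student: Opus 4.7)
The plan is to reduce Theorem \ref{thm_classification_of_surfaces} to the classification of pencils of real symmetric $3\times 3$ matrices under simultaneous congruence, and then to verify each condition on the resulting short list of normal forms. The equivalence relation in the statement is precisely $P \mapsto M_1^T(aP+bQ)M_1$ together with the analogous transformation of $Q$, so the $GL_3(\R) \times GL_2(\R)$ action coincides with the Kronecker--Weierstrass framework for $3\times 3$ symmetric pencils, and the admissible real normal forms are already quite restricted.

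The organizing invariant is the characteristic cubic $D(x_1,x_2) := \det(x_1 P + x_2 Q)$, a homogeneous cubic in $(x_1, x_2)$. I would split the argument according to whether $D \equiv 0$ (singular pencil) or $D \not\equiv 0$ (regular pencil), and further, in the regular case, according to the factorization pattern of $D$ over $\R$. For part (1), observe that near a zero of $D$ on $S^1$ of order $k$, the integrand $|D|^{-\gamma}$ behaves like $|\theta - \theta_0|^{-k\gamma}$, which is integrable for every $\gamma < 2/3$ if and only if $k\le 1$. Hence (CM) is equivalent to $D\not\equiv 0$ together with the simpleness of every real projective root of $D$. The bulk of (1) is therefore to match this algebraic condition against the joint condition (D)$+$($\Rtwo$), which I would verify by running through the normal forms arising from (2)--(4).

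For part (2), failure of (D) produces a non-zero $w$ with $\det[\nabla P(\xi), \nabla Q(\xi), w] \equiv 0$. Normalizing $w=e_3$ by a change of basis in $\R^3$, this becomes the polynomial identity $(P\xi)_1(Q\xi)_2 \equiv (P\xi)_2(Q\xi)_1$ in $\xi$. A case analysis on the rank of the $2\times 3$ matrix formed by the first two rows of $P$ and $Q$, combined with the real-signature classification of quadratic forms on $\R^2$ (which produces the $\pm$ in $\xi_1^2\pm\xi_2^2$), pinpoints the two stated normal forms. The irreducibility hypothesis is used precisely here to rule out the degenerate pairs where $P$ and $Q$ share an omitted variable. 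For parts (3) and (4) one assumes (D), so $D\not\equiv 0$ and the pencil is regular. The $(\Rzero)$ condition produces a hyperplane $H$ on which both $P|_H$ and $Q|_H$ vanish; together with (D) and irreducibility this forces, after a change of basis, the pair $(\xi_1\xi_2,\xi_1\xi_3)$. The $(\Rone)$ condition, similarly, yields a hyperplane on which every pencil element has rank at most $1$ without being jointly zero; combined with (D) this narrows the possibilities to the two listed families, and both directions are checked explicitly on the normal forms.

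The main obstacle I anticipate is the exhaustive case analysis in part (2): one needs to verify that the polynomial identity arising from failure of (D) admits no normal form other than the two stated, which involves several sub-cases depending on which of the involved linear forms in $\xi$ degenerate. A secondary obstacle is the real-signature bookkeeping (responsible for the $\pm$ signs), which distinguishes the real classification from its complex counterpart and requires extra care when bringing a pair of real quadratic forms to canonical form under the given equivalence.
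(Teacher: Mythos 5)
Your overall strategy -- normalizing a hyperplane that achieves the min--max in the rank condition for items (3)--(4), and organizing item (1) around the characteristic cubic $D(x_1,x_2)=\det(x_1P+x_2Q)$ and the multiplicities of its real roots -- is essentially the route the paper takes. Your observation that (CM) is equivalent to ``$D\not\equiv 0$ and every real projective zero of $D$ is simple'' is exactly the reduction the paper uses. One organizational difference: the paper does not reprove items (2) and (4) at all; it cites them from an earlier classification paper (noting that irreducibility prevents (D) and $(\Rzero)$ from failing simultaneously), whereas you propose a direct proof of (2) from the identity $(P\xi)_1(Q\xi)_2\equiv(P\xi)_2(Q\xi)_1$ after normalizing $w=e_3$. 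That is a legitimate, if lengthier, alternative; the case analysis you anticipate there is indeed the main labor.

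The one point where your plan, as written, does not close is the implication $(\mathrm{D})+(\Rtwo)\Rightarrow(\mathrm{CM})$ in item (1). You say you would ``verify by running through the normal forms arising from (2)--(4),'' but those normal forms exhaust precisely the pairs that \emph{fail} $(\mathrm{D})+(\Rtwo)$; checking that (CM) fails on each of them only yields the converse implication $(\mathrm{CM})\Rightarrow(\mathrm{D})+(\Rtwo)$. For the forward direction you need a separate argument in the regime where $(\mathrm{D})$ and $(\Rtwo)$ hold but $D$ has a repeated real factor (or vanishes identically), and no normal form for that regime appears in the theorem. The paper handles this by contradiction: it first shows that $D\equiv 0$ together with $(\mathrm{D})+(\Rtwo)$ forces $(P,Q)\equiv(\xi_1\xi_2,\xi_1\xi_3)$, contradicting $(\Rtwo)$; then, writing $D=x_2^2L(x_1,x_2)$ and diagonalizing $P$, it splits on whether $Q$ contains $\xi_3^2$ and in each case exhibits an explicit hyperplane (e.g.\ $\xi_1=\pm\xi_2$ or $\xi_1=0$) on which $(\Rtwo)$ fails. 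Your Kronecker--Weierstrass framework can certainly absorb this -- the repeated-root pencils have their own short list of real congruence normal forms -- but you would need to carry out that classification explicitly rather than lean on the list in (2)--(4).
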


\begin{thm}\label{thm_sharpness_degenerate}
Theorem \ref{0822main_thm} is sharp, up to endpoints, for all pairs $(P, Q)$ that are equivalent to those listed in item (2) or item (4) of Theorem \ref{thm_classification_of_surfaces}.  In other words, for all these pairs,  \eqref{restrictionestimate} fails for $q<4$ or $q\ge 4, p<q/(q-3)$. 
\end{thm}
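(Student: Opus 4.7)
The plan is to establish sharpness by exhibiting Knapp-type test functions for each of the three representative pairs -- \((\xi_1^2\pm\xi_2^2,\xi_3^2)\), \((\xi_1\xi_2+\xi_3^2,\xi_1^2)\), and \((\xi_1\xi_2,\xi_1\xi_3)\) -- that force failure of \eqref{restrictionestimate} in the two complementary regions: the line \(p<q/(q-3)\) and the half-plane \(q<4\). By the definition of equivalence, sharpness for these representatives yields sharpness for the whole equivalence classes of items (2) and (4) of Theorem~\ref{thm_classification_of_surfaces}.

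For the condition \(p\ge q/(q-3)\) I use one unified thin-slab Knapp example. In each pair there is a distinguished coordinate direction (\(\xi_3\) for \((\xi_1^2\pm\xi_2^2,\xi_3^2)\) and \(\xi_1\) for the other two) such that on the slab \(R_\delta=\{|\xi_i|<\delta,\,|\xi_j|<1\ (j\neq i)\}\) the phase in \eqref{0823equ_extension} stays \(O(1)\) on a dual rectangular box in \(\R^5\) of volume \(\sim\delta^{-3}\). Taking \(f=\mathbf{1}_{R_\delta}\) gives \(|E^Sf|\gtrsim\delta\) on this box, so \(\|E^Sf\|_q\gtrsim\delta^{1-3/q}\) versus \(\|f\|_p\sim\delta^{1/p}\); as \(\delta\to 0\) this forces \(p\ge q/(q-3)\).

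For the condition \(q\ge 4\) the mechanism differs case by case. In Case (2a) the surface is literally a tensor product \(S=S_1\times S_2\) with \(S_2=\{(\xi_3,\xi_3^2)\}\subset\R^2\) the parabola, so for \(f=g\otimes h\) one has \(\|E^Sf\|_q=\|E^{S_1}g\|_q\|E^{S_2}h\|_q\); choosing \(h=\mathbf{1}_{[-1,1]}\) and any nontrivial bump \(g\) makes this infinite since \(\|E^{S_2}h\|_q=\infty\) for \(q\le 4\) by stationary phase. In Case (2b), taking \(f=\mathbf{1}_{[-1,1]^3}\) and carrying out the \(\xi_2\) and \(\xi_3\) integrations yields \(E^Sf(x)=V(x_3,x_4)\cdot H(x_1,x_2,x_4,x_5)\), where \(H(x_1,x_2,x_4,x_5)=\int_{-1}^1 e(\xi_1 x_1+\xi_1^2 x_5)\,D(x_2+\xi_1 x_4)\,d\xi_1\) is itself the parabola extension (over \((\xi_1,\xi_1^2)\)) of the density \(\xi_1\mapsto D(x_2+\xi_1 x_4)\mathbf{1}_{[-1,1]}\); at generic \((x_2,x_4)\) this density is nontrivial, so \(\|H(\cdot,x_2,x_4,\cdot)\|_{L^q(dx_1dx_5)}=\infty\) for \(q\le 4\). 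In Case (4) there is no parabola factor; one computes
\[
E^Sf(x)=\int_{-1}^1 e(\xi_1 x_1)\,D(x_2+\xi_1 x_4)\,D(x_3+\xi_1 x_5)\,d\xi_1
\]
for \(f=\mathbf{1}_{[-1,1]^3}\), and for \(|x_4|,|x_5|\sim A\gg 1\) the two peak conditions \(\xi_1\approx-x_2/x_4\) and \(\xi_1\approx-x_3/x_5\) coincide on a one-dimensional locus, giving \(|E^Sf|\sim 1/A\) on an \((x_1,x_2,x_3)\)-tube of volume \(\sim A^2\); dyadic summation in \(A\) yields \(\|E^Sf\|_q^q\gtrsim\int_1^\infty A^{3-q}\,dA=\infty\) for \(q\le 4\).

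The main obstacle is Case (4), where no parabola substructure of \(S\) is directly available and the divergence must come from the global bilinear geometry of the surface. The delicate step is to show that at each dyadic scale \(A\) the two Dirichlet kernels conspire to give \(|E^Sf|\sim 1/A\) on a tube of volume exactly \(\sim A^2\) and not smaller; this requires parameterizing the common-peak locus of the two kernels carefully and checking that the remaining \(x_1\)-integration is neither killed by oscillation nor improved by destructive interference. The logarithmic borderline at \(q=4\) then appears automatically, in agreement with the ``up to endpoints'' qualifier.
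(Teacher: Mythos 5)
Your Knapp slab for the necessity of $p\ge q/(q-3)$ is correct and gives the same condition $1/p+3/q\le 1$ as the paper's computation (the paper uses the anisotropic box $[0,R^{-1}]\times[0,1]\times[0,R^{-1/2}]$ for $(\xi_1\xi_2+\xi_3^2,\xi_1^2)$ rather than a one-parameter slab, but the count is the same), and your treatment of the product case $(\xi_1^2\pm\xi_2^2,\xi_3^2)$ coincides with the paper's. For the necessity of $q\ge 4$ in the remaining two cases you take a genuinely different route: the paper randomizes translates of the Knapp box and applies Khintchine's inequality, reducing everything to the square-function lower bound $\int\bigl(\sum_\Box|E^S 1_\Box|^2\bigr)^{q/2}\ge\int\sum_\Box|E^S 1_\Box|^q$, in which each summand is bounded below only on its own dual Knapp box, so no cancellation within an oscillatory integral ever has to be ruled out. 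Your factorization $E^Sf=V\cdot H$ in case (2b) is a valid alternative; the stationary-phase lower bound for the parabola extension of a fixed nontrivial smooth density is standard, and Tonelli then gives $\|E^Sf\|_q=\infty$ for $q\le 4$.

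The gap is in case (4). Your argument there rests entirely on the pointwise lower bound $|E^Sf|\sim 1/A$ on a set of $(x_1,x_2,x_3)$-volume $\sim A^2$ for each $(x_4,x_5)$ with $|x_4|,|x_5|\sim A$, and this is exactly the step you do not prove. It is not a routine omission: with $D(t)=\sin(2\pi t)/(\pi t)$, the tail contribution to $\int_{-1}^1 e(\xi_1x_1)D(x_2+\xi_1x_4)D(x_3+\xi_1x_5)\,d\xi_1$ from the region where both kernels are decaying is of size $\int_{|\xi_1-a|\gtrsim 1/A}(A|\xi_1-a|)^{-2}\,d\xi_1\sim 1/A$, i.e., the same order as the claimed main term, so one must genuinely exhibit a positive-measure set on which the main term survives. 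Positivity of $D^2$ handles the center $x_1=0$, $x_2=x_3$, $x_4=x_5$, but extending the bound to the full volume $A^2$ requires quantitative control of $\widehat{D^2}$ and of the cross-correlations $\int D(t)D(t+v)\,dt$ under the oscillation $e(\xi_1x_1)$, none of which you supply. The paper's Khintchine device sidesteps this entirely, and you could adopt it here: take $f=\sum_\Box\varepsilon_\Box 1_\Box$ with $\Box=I_1\times[0,1]^2$, $|I_1|=R^{-1}$; each $|E^S1_\Box|\gtrsim R^{-1}$ on a dual box of volume $R^3$, there are $R$ such boxes, and the square-function lower bound yields $R^{4-q}\lesssim 1$, i.e.\ $q\ge 4$, using only the single-box Knapp estimate you have already verified.
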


\begin{rem}
Among all $3$-surfaces in $\R^5$, the ones that satisfy the (CM) condition are in some sense the most non-degenerate ones. Regarding these surfaces, Christ \cite{christthesis} showed that \eqref{restrictionestimate} fails for $q\le 10/3$, even if we take $p=\infty$ there. 
\end{rem}

Fourier restriction estimates for quadratic surfaces of co-dimension bigger than one have been studied by a number of authors in the literature. Previously we have mentioned Christ \cite{christthesis, MR766216} and Mockenhaupt \cite{Mockenhaupt}. Other than these, Oberlin \cite{MR2078263} studied Fourier restriction estimates and $L^p \rightarrow L^q$ improving estimates for the surface $(\xi_1, \xi_2, \xi_3, \xi_1^2+\xi_2^2, \xi_2^2+\xi_3^2)$, which is a surface that satisfies the (CM) condition; Banner \cite{nla.cat-vn3284359} studied restriction estimates for certain surfaces not satisfying the (CM) condition; Bak and Lee \cite{MR2064058}, and Oberlin \cite{articleoberlin} obtained sharp Fourier restriction estimates for certain quadratic moment surfaces of high co-dimensions; Bak, Lee and Lee \cite{MR3694011} studied bilinear Fourier restriction estimates for surfaces of co-dimension bigger than one and deduce a linear restriction estimate for the complex paraboloid in $\C^3$, which can be identified with $(\xi_1,\xi_2,\xi_3,\xi_4,\xi_1^2-\xi_2^2+\xi_3^2-\xi_4^2,\xi_1\xi_2+\xi_3\xi_4)$; Lee and Lee \cite{2019arXiv190304093L} studied restriction estimates for complex hyper-surfaces, which can also be viewed as special cases of real surfaces of co-dimension two, in some high dimensions. 

\medskip

\noindent {\bf Outline of paper.} In Section \ref{200914section2} we present some linear algebra computation and provide the proof of Theorem \ref{thm_classification_of_surfaces}, a classification of surfaces of co-dimension two in $\R^5$; in Section \ref{200914section3} we  prove Theorem \ref{thm_sharpness_degenerate} by constructing a few explicit examples; in Section     \ref{200906section:reduction}, we present an epsilon removal lemma, which reduces \eqref{restrictionestimate} to some ``local" estimate with an epsilon loss; the proof of Theorem \ref{0822main_thm} will be presented in the remaining sections. 

The main novelty of the paper lies in the proof of Theorem \ref{0822main_thm} for surfaces that fail the (D) condition, that is, surfaces that are equivalent to $(\xi_1, \xi_2, \xi_3, \xi_1^2, \xi_1\xi_2+\xi_3^2)$.  The proof is presented in the last section. It relies on a nested induction argument. More precisely, we will repeatedly apply a broad-narrow analysis of Bourgain and Guth \cite{MR2860188}  combined with certain bilinear restriction estimate with favourable dependence on the supports of input functions (See Lemma \ref{200901lem7.4}). As a consequence of  the iteration, we reduce our restriction estimate to a restriction estimate for lower dimensional surfaces (See Proposition \ref{lowerdim}), which is also proved in the last section.

\medskip

\noindent {\bf Notation. } For a set $A\subset \R^d$ and a function $f$ defined on $\R^d$, we use $1_A$ to denote the indicator function of $A$ and $f_A$ to denote $f\cdot 1_A$. Define $E_A^S f$ to be $E^S f_A$. Let $K_1\le K$ be two dyadic numbers, and $\Box\subset [0, 1]^d$ be a dyadic cube of side length $1/K_1$. We use $\mc{P}(\Box, 1/K)$ to denote the partition of $\Box$ into dyadic cubes of side length $1/K$. If $\Box=[0, 1]^d$, then $\mc{P}(\Box, 1/K)$ is abbreviated to $\mc{P}(1/K)$.

\subsection*{Acknowledgements} The authors were supported in part by the NSF grant 1800274. The authors would like to thank the referee for valuable suggestions.

\section{Some linear algebra: Proof of Theorem \ref{thm_classification_of_surfaces}}\label{200914section2}

Let us first prove items (2)--(4). We will show that if a pair $(P, Q)$ fails either the (D) condition or the $(\Rtwo)$ condition, then it must be equivalent to one of the pairs in items (2)--(4). 

\medskip

First of all, let us mention that item (2) and item (4) have already been obtained in \cite{2019arXiv191203995G}, see item (3) and item (2) of Theorem 1.1 there, respectively. Let us be more precise. Recall the condition (1.8) in \cite{2019arXiv191203995G}: 
\begin{equation}
    P \text{ and } Q \text{ do not share a common linear real factor.} \tag{$\text{R}_{\ge 1}$}
\end{equation}
Here we call it the condition $(\text{R}_{\ge 1})$, as it is equivalent to the union of the condition $(\Rone)$ and the condition $(\Rtwo)$. Moreover, as we always assume that the pair $(P, Q)$ is irreducible, the (D) condition and the $(\Rzero)$ condition can not fail simultaneously. This explains why item (3) and item (2) of Theorem 1.1 in \cite{2019arXiv191203995G} are equivalent to item (2) and item (4) of the above Theorem \ref{thm_classification_of_surfaces}, respectively. 

\medskip

From now on, we consider pairs $(P, Q)$ that satisfy the (D) condition and the $(\Rone)$ condition and show that they are equivalent to examples listed in item (3). We pick a hyperplane $H$ such that the equality in \eqref{equ_rank_condition} is achieved with $d'=1$. By linear transformations, we may assume that $H$ is the plane $\xi_1=0$ and 
\begin{equation}
    (P,Q) \equiv (\xi_1 L_1(\xi_1, \xi_2,\xi_3)+\xi_2^2, \xi_1 L_2(\xi_1, \xi_2,\xi_3))
\end{equation}
where $L_1$ and $L_2$ are linear forms. 

There are two subcases depending on whether $L_2$ depends on $\xi_3$ or not. Suppose that it does. By a change of variables in $\xi_1, \xi_2$ and $\xi_3$, we obtain 
\begin{equation}
\begin{split}
(P,Q) &\equiv (a\xi_1^2+b\xi_1\xi_2+\xi_2^2,\xi_1\xi_3) \equiv
(a'\xi_1^2+\xi_2^2,\xi_1\xi_3),
\end{split}
\end{equation}
for some real numbers $a, b$ and $a'$.
By scaling, we may assume that $a'=0,1,-1$. We can rule out the case that $a'=0$ as otherwise the pair $(P, Q)$ violates the (D) condition. The other two possibilities $a=\pm 1$ are listed in item (3). This finishes the discussion on the case that $L_2$ depends on $\xi_3$. Let us now suppose that $L_2$ does not depend on $\xi_3$. Then we have 
\begin{equation}
    (P,Q) \equiv (\xi_1 L_1(\xi_1, \xi_2,\xi_3)+\xi_2^2, \xi_1 \xi_2),
\end{equation}
as we know that $L_2$ can not vanish and the (D) condition guarantees that $L_2$ depends on $\xi_2$. By adding a multiple of $Q$ to $P$, we further obtain 
\begin{equation}
    (P,Q) \equiv (a\xi_1^2+b\xi_1 \xi_3+\xi_2^2, \xi_1 \xi_2),
\end{equation}
for some $a, b\in \R$. In particular, we know $b\neq 0$ as otherwise the pair becomes reducible. By a further change of variables in $\xi_1$ and $\xi_3$, we obtain that the pair $(P, Q)$ is equivalent to $(\xi_1\xi_3+\xi_2^2, \xi_1\xi_2)$, which was also listed in item (3). This finishes the proof of item (3). 

\medskip

In the end, let us prove item (1). We start with proving that if a pair $(P, Q)$ satisfies the (CM) condition, then it also satisfies the (D) condition and the $(\Rtwo)$ condition. We argue by contradiction and suppose that $(P, Q)$ fails one of these two conditions. Previously we have proven that $(P, Q)$ must be equivalent to one of the pairs in items (2)--(4). Direct computations show that the (CM) condition is violated for all those pairs, and therefore we arrive at a contradiction. 

Next we show the converse direction of the equivalence, that is, we assume that $(P, Q)$ satisfies the (D) condition and the $(\Rtwo)$ condition, and aim to show that it also satisfies the (CM) condition. We will again argue by contradiction and assume that the (CM) condition fails. Define $D(x_1,x_2):=\det(x_1P+x_2Q)$. 

First of all, under the assumption that $(P, Q)$ satisfies the (D) condition and the $(\Rtwo)$ condition, we claim that if $D(x_1,x_2)$ is identically zero, then $(P,Q) \equiv (\xi_1\xi_2,\xi_1\xi_3)$. Note that the surface $(\xi_1\xi_2,\xi_1\xi_3)$ violates the $(\Rtwo)$ condition, and therefore we arrive at a contradiction. Hence, by the claim, we may assume that $D(x_1,x_2)$ is a nonzero homogeneous polynomial of degree three. Let us give a proof of the claim. Without loss of generality, we may assume that $\mathrm{rank}(P) \geq \mathrm{rank}(Q)$. It is easy to see that the $(\Rtwo)$ condition  is violated when the rank of $P$ is one. Also, for the case that the rank of $P$ is three, the function $D(x_1,x_2)$ can not be identically zero. Let us consider the remaining case that the rank of $P$ is two. By a change of variables, we may assume that $P=\xi_1^2+\xi_2^2$ or $\xi_1\xi_2$. Let us take a look at the former case first. Since $(P,Q)$ is irreducible, $Q$ depends on $\xi_3$. We may assume that $Q$ does not contain $\xi_3^2$. Otherwise, $D(x_1,x_2)$ can not be identically zero. Write
\begin{equation}
    (P,Q) \equiv (\xi_1^2+\xi_2^2,a\xi_1^2+b\xi_1\xi_2+c\xi_1\xi_3+d\xi_2\xi_3).
\end{equation}
By routine calculations, we see that $D(x_1,x_2)$ is identically zero only if $c=d=0$. However, this cannot happen because $(P,Q)$ is irreducible. This finishes the discussion on the former case. Let us move on to the latter case that $P=\xi_1\xi_2$. Similarly, we may assume that $Q$ does not contain $\xi_3^2$. Write
\begin{equation}
    (P,Q) \equiv (\xi_1\xi_2,a\xi_1^2+b\xi_2^2+c\xi_1\xi_3+d\xi_2\xi_3).
\end{equation}
Without loss of generality, we may assume that $d=1$ and by a change of variables, we may further assume that $b=0$. The function $D(x_1,x_2)$ now becomes $2y^2(cx-ay)$, which is identically zero if and only if $c=a=0$. Hence, $(P,Q) \equiv (\xi_1\xi_2,\xi_1\xi_3)$ and this completes the proof of the claim. 

We have shown that $D(x_1,x_2)$ is a nonzero homogeneous polynomial of degree three.
From the definition \eqref{curvature_zz} of the (CM) condition and the fact that it is violated, we see immediately that $D(x_1,x_2)$ must admit a linear factor of multiplicity at least two. By rotation, we may assume that $D(x_1,x_2)=x_2^2 L(x_1,x_2)$ for some linear form $L$. Note that
\begin{equation}
    \det(P)=D(1,0)=0.
\end{equation}
After diagonalizing the matrix $P$, we may assume that $P(\xi)=\xi_1^2+a\xi_2^2$ with $a=0, \pm 1$. We can exclude the case that $a=0$ as it is easy to check that in this case the (D) condition is violated. 

There are two cases depending on whether $Q$ has the $\xi_3^2$ term or not. Suppose that it does. By completing squares, we obtain
\begin{equation}
\begin{split}
    (P,Q) &\equiv (\xi_1^2\pm \xi_2^2 ,a'\xi_1^2+b'\xi_2^2+c'\xi_1\xi_2+\xi_3^2)
    \\&
    \equiv
    (\xi_1^2 \pm \xi_2^2 ,b''\xi_2^2+c'\xi_1\xi_2+\xi_3^2)=:(P',Q').
\end{split}
\end{equation}
The function $\det{(x_1P'+x_2Q')}$ is given by
\begin{equation}
    2 x_2 \det\begin{bmatrix}
        2x_1 & c'x_2
        \\
        c'x_2 & \pm 2x_1+2b''x_2
    \end{bmatrix}=:x_2D'(x_1,x_2).
\end{equation}
Note that $D'(x_1,x_2)$ does not have a linear factor $x_2$. Therefore, in order for $x_2 D'(x_1, x_2)$ to violate the integrability condition as in \eqref{curvature_zz}, the quadratic form $D'(x_1, x_2)$ must be the square of a linear form. As a consequence, we obtain 
\begin{equation}
    (P,Q) 
    \equiv
    (\xi_1^2 - \xi_2^2 ,b''\xi_2^2+c'\xi_1\xi_2+\xi_3^2)
\end{equation}
and 
\begin{equation}
    (b'')^2 - (c')^2=0.
\end{equation}
By considering either the hyperplane $\xi_1=\xi_2$ or the hyperplane $\xi_1=-\xi_2$, we see that the $(\Rtwo)$ condition fails. 

It remains to handle the case where $Q$ does not have the $\xi_3^2$ term. Notice that $Q$ must contain either $\xi_1 \xi_3$ or $\xi_2\xi_3$. By a change of coordinate in $\xi_1$ and $\xi_2$, we may assume that $Q$ has a $\xi_1\xi_3$ term and 
\begin{equation}
\begin{split}
    (P,Q) &\equiv (F(\xi_1, \xi_2),a\xi_1^2+b\xi_2^2+c\xi_1\xi_2+ \xi_1\xi_3),
\end{split}
\end{equation}
where $F$ is a quadratic form. By taking the hyperplane $\xi_1=0$, we can see that the $(\Rtwo)$ condition is violated. This finishes the proof.

\section{Examples: Proof of Theorem \ref{thm_sharpness_degenerate}}\label{200914section3}

We will show that if a pair $(P, Q)$ is equivalent to $(\xi_1^2\pm \xi_2^2, \xi_3^2)$, $(\xi_1\xi_2+\xi_3^2, \xi_1^2)$ or $(\xi_1 \xi_2, \xi_1\xi_3)$, then for the associated surface $S$, the restriction estimate \eqref{restrictionestimate} fails for $q<4$ or $q\ge 4, p<q/(q-3)$.

\medskip

The case $(\xi_1^2\pm \xi_2^2, \xi_3^2)$ is trivial, as our surface can be written as the ``product" of a parabola and a paraboloid or hyperbolic paraboloid, depending on the choice of the  $\pm$ sign. 

\medskip

Let us next consider the case that $(P, Q)$ is equivalent to $(\xi_1 \xi_2+\xi_3^2, \xi_1^2)$. Let $R$ be a large number. Take the function $f_1=1_{[0,R^{-1}] \times [0,1]\times [0, R^{-1/2}]}$, that is, the characteristic function of the set
\begin{equation}
    \{\xi=(\xi_1, \xi_2, \xi_3)\in [0, 1]^3: 0\le \xi_1\le R^{-1}, 0\le \xi_3\le R^{-1/2}\}.
\end{equation}
Recall the definition of the extension operator $E^S$ in \eqref{0823equ_extension}. We see that if $x$ is chosen such that 
\begin{equation}
|x_1|+|x_4|\le R/100, |x_2|\le 1/100, |x_3|\le R^{1/2}/100, |x_5|\le R^2/100, 
\end{equation}
then $|E^S f_1(x)|\gtrsim R^{-3/2}$. As a consequence, we obtain the constraint $3/q+ 1/p\le 1$. To see the constraint $q\ge 4$, we use Khintchine's inequality. Let $\Box=I_1\times I_2\times I_3$ be a dyadic rectangular box in $[0, 1]^3$ with $|I_1|=R^{-1}, |I_2|=1, |I_3|=R^{-1/2}$. Let $\varepsilon=(\varepsilon_{\Box})$ be a sequence of independent random variables  where $\varepsilon_{\Box}$ takes values $\pm 1$ randomly. We have 
\begin{equation}\label{equ_0823_khintchin}
    \mathbbm{E}_{\varepsilon} \|E^S (\sum_{\Box\subset [0, 1]^3} \varepsilon_{\Box}\cdot  1_{\Box})\|_q^q\simeq \int \Big(\sum_{\Box} |E^S 1_{\Box}|^2\Big)^{q/2}.
\end{equation}
First of all, \eqref{equ_0823_khintchin} is bounded from below by 
\begin{equation}
    \int \sum_{\Box} |E^S 1_{\Box}|^q \gtrsim R^{-\frac{3q}{2}} R^{6}.
\end{equation}
Moreover, in order for \eqref{restrictionestimate} to hold, we need that \eqref{equ_0823_khintchin} is bounded from above by a constant. This leads to the constraint $q\ge 4$. 

\medskip

Regarding the last case $(\xi_1 \xi_2, \xi_1\xi_3)$, essentially the same construction as above works as well. We omit the details.

\section{Interpolations and an epsilon removal lemma}\label{200906section:reduction}

In the following sections, when proving the desired restriction estimate \eqref{restrictionestimate} for the range $q>4$ and $p>q/(q-3)$, we will from time to time apply the trivial estimate 
\begin{equation}
    \|E^S f\|_{\infty}\le \|f\|_1.
\end{equation}
This estimate, together with interpolation, allows us to reduce the above range of $(p, q)$ to the range $p=q>4$.

\medskip

A second reduction, which is known as the epsilon removal lemma in the literature, will be needed in Section \ref{200912section7} to handle surfaces that satisfies the (CM) condition. 

Let $S$ be a $d$-dimensional compact smooth surface in $\R^n$ parametrizd by $\{ (\xi,\mathbf{P}(\xi))\in [0,1]^d \times \R^{n-d} \}$. Let $\psi_0$ be a smooth function such that $\psi_0(\xi)=1$ on $[0,1]^d$ and $\psi_0(\xi)=0$ outside of the cube $[-1,2]^d$. Denote by $E^Sf$ the extension operator associated with the surface $S$ defined by
\begin{equation}
    E^Sf(x',x'')=\int_{[-1,2]^d}e(x' \cdot \xi + x'' \cdot \mathbf{P}(\xi))f(\xi)\psi_0(\xi) \,d\xi.
\end{equation}
The function $\psi_0$ is introduced for some technical reason.

Several versions of epsilon removal lemma have been obtained in the literature \cite{MR1666558, MR2860188, Kim2017SomeRO}. However, it has been mostly stated only for hypersurfaces. We record here the epsilon removal lemma for surfaces with co-dimension larger than one. The idea of the proof is essentially the same as those in the literature.

\begin{thm}
Let $q \geq p \geq 2$. We assume the decay of the function $E^S1$
\begin{equation}\label{fourierdecay}
    |E^S1(x)| \leq C_{S,\rho}(1+|x|)^{-\rho}
\end{equation}
for some $\rho>0$, and assume the local extension estimate
\begin{equation}
    \|E^Sf\|_{L^q(B_R)} \leq C_{S,p,q,\alpha}R^{\alpha}\|f\|_{L^p}
\end{equation}
for every $0<\alpha \ll 1$, $R \geq 1$,  ball $B_R$ of radius $R$, and function $f$. Under these conditions, we have the global extension estimate:
\begin{equation}\label{globalestimate}
    \|E^Sf\|_{L^s(\R^{n})} \leq C_{S,p,s}\|f\|_{L^p}
\end{equation}
for every $s>q$ and function $f$.
\end{thm}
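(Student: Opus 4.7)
My plan is to adapt Tao's standard epsilon-removal argument \cite{MR1666558} to the co-dimension $n-d$ setting; the surface-specific input enters only through the Fourier decay hypothesis \eqref{fourierdecay}, so the modifications are essentially cosmetic.

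The first step is to pass to a restricted weak-type statement. Interpolating the target with the trivial bound $\|E^S f\|_\infty \le \|f\|_1$, it suffices to prove that, for every measurable $E \subset [-1,2]^d$ and every $\lambda > 0$, the superlevel set $F_\lambda := \{x \in \R^n : |E^S 1_E(x)| > \lambda\}$ obeys $|F_\lambda| \le C(\lambda^{-1}|E|^{1/p})^s$. I would tile $\R^n$ by cubes of side $R$ and let $\mc{B} = \{B_i\}_{i=1}^N$ be those cubes intersecting $F_\lambda$ in substantial relative density. The local hypothesis applied on each $B_i$ immediately yields
\begin{equation*}
\lambda^q |F_\lambda| \le C N R^{q\alpha} |E|^{q/p},
\end{equation*}
so the problem reduces to producing a sublinear bound on $N$ in terms of $|F_\lambda|/R^n$.

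For this I would invoke the familiar sparsification/$TT^*$ mechanism. A greedy pigeonhole partitions $\mc{B}$ into $O((\log N)^{O(1)})$ subfamilies, each $M$-separated in the sense that the centers of distinct balls are at mutual distance $\ge MR$, where $M = M(N)$ can be any prescribed polynomial. On an $M$-separated subfamily, one exploits the fact that the kernel of $E^S(E^S)^*$ coincides, up to bounded factors, with the smooth truncation $E^S \psi_0^2$, hence is dominated by $(1+|y|)^{-\rho}$ by \eqref{fourierdecay}. Consequently the Gram matrix $\big(\langle E^S 1_{B_i}, E^S 1_{B_j}\rangle\big)_{i,j}$ becomes near-diagonal once $M^\rho$ exceeds $N$, producing a Tomas--Stein-style almost-orthogonality bound whose dualization forces $N$ to be a sublinear power of $|F_\lambda|$.

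Combining the two bounds and optimizing $R$ and $\alpha$ --- taking $\alpha$ much smaller than $\rho(s-q)/s$ and $R$ a suitable power of $\lambda^{-p}|E|$ --- delivers \eqref{globalestimate} for every $s>q$. The main obstacle is not any individual ingredient (all are standard) but the book-keeping: the separation scale $M$ must be chosen large enough that the off-diagonal pieces of the $TT^*$ kernel are beaten by a factor $N^{-1}$, yet $R$ small enough that the overall $R^\alpha$-loss is absorbed by the exponent gap $s-q$. Because the local hypothesis permits $\alpha$ arbitrarily small, such a tuning exists, but calibrating the three parameters $(R,M,\alpha)$ against the decay rate $\rho$ is where the care is required.
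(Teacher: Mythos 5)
Your route is the primal-side, restricted-weak-type version of the epsilon-removal argument, whereas the paper works entirely on the dual side: it dualizes to $\|\hat F\|_{L^{p'}(d\sigma)}\lesssim\|F\|_{L^{s'}}$, runs a Calder\'on--Zygmund/covering decomposition of $F$ into sparse collections of balls, and proves the key sparse-ball estimate by interpolating an $L^1$ bound (triangle inequality plus $\sup_y\int_S|\widehat{\psi_R}(\xi-y)|\,d\sigma\lesssim R^{n-d}$) with an $L^2$ bound (Gram-matrix expansion, off-diagonal terms killed by the Fourier decay and the $(NR)^C$ separation). The ingredients are the same as yours --- local estimate, sparsification, $TT^*$ with decay --- but the dual organization buys something concrete: the passage from the $\ell^{p'}$ sparse estimate to the $\ell^{s'}$ one is the trivial embedding $\ell^{s'}\hookrightarrow\ell^{p'}$ (using $s>p$), so the full range $s>q$ drops out of the final summation over scales without any case analysis in the level $\lambda$.

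The soft spot in your write-up is exactly the step you defer to ``book-keeping.'' The two bounds you propose to combine --- $\lambda^q|F_\lambda|\le CNR^{q\alpha}|E|^{q/p}$ and ``a sublinear bound on $N$'' --- do not close as stated. What the $TT^*$/sparseness step actually yields is a mass bound of the form $\lambda^2\sum_i|F_\lambda\cap B_i|\lesssim|E|\max_i|F_\lambda\cap B_i|$, and feeding in the per-ball local bound $|F_\lambda\cap B_i|\le(R^\alpha\lambda^{-1}|E|^{1/p})^q$ gives $|F_\lambda|\lesssim R^{q\alpha}(\log N)^{O(1)}(\lambda^{-1}|E|^{1/p})^{q+2}$, i.e.\ restricted weak type with exponent $q+2$, not $s$ for $s$ close to $q$. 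Since $\lambda^{-1}|E|^{1/p}$ can be either side of $1$, this overshoot is only acceptable after splitting into the regimes $\lambda\gtrless|E|^{1/p}$, choosing $R$ as a negative power of $\lambda^{-1}|E|^{1/p}$ in one regime and $R\sim1$ with a pure $L^2$ argument in the other, and controlling $\log N$ by a dyadic pigeonhole on the per-ball mass (a cube count $N$ is not controlled by $|F_\lambda|/R^n$ when cubes carry tiny slivers of $F_\lambda$). None of this is fatal, but it is the actual content of the proof rather than calibration. One further point to repair: the kernel of $E^S(E^S)^*$ is $\widehat{|\psi_0|^2\,d\sigma}$, not $E^S1=\widehat{\psi_0\,d\sigma}$, so the hypothesis \eqref{fourierdecay} must first be upgraded to decay for general smooth densities before it dominates the off-diagonal Gram entries.
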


Let us give a sketch of the theorem. One  ingredient is the Calderon-Zygmund decomposition. We will prove the dual estimate of \eqref{globalestimate}
\begin{equation}
    \|\hat{F}\|_{L^{p'}(d\sigma)} \lesssim \|F\|_{L^{s'}(\R^n)},
\end{equation}
which fits better than the extension estimate \eqref{globalestimate} with the Calderon-Zygmund decomposition. Here, $d\sigma$ is the surface measure of the surface $S$. Note that $1<p'<2$.  We will interpolate an $L^1$-estimate and an $L^2$-estimate.
\medskip

We define sparse balls. Let $C$ be a sufficiently large constant depending on the parameter $\rho$ and the dimensions $d,n$. We call a collection of balls $\{B_R(x_i) \}_{i=1}^{N}$ sparse if $|x_i-x_j| \geq (NR)^C$ for any $i \neq j$. We will prove
\begin{equation}\label{sparsereduction}
    \|\hat{F}\|_{L^{p'}(d\sigma) } \leq C_{p,s,\alpha} R^{\alpha} \|F\|_{L^{s'}(\R^n)}
\end{equation}
for any function $F$ supported in the union of the sparse collection of balls $\{B_R(x_i)\}_{i=1}^{N}$ and any $R \geq 1$ and $0<\alpha \ll 1$. Once we obtain this estimate, the rest of the proof is identical to that in \cite{MR1666558} and \cite{MR2860188}. In particular, they do not use the condition that the surface $S$ is a hypersurface, and the same proof works for our surface. Hence, we leave out the details.
\medskip

It remains to prove \eqref{sparsereduction}. We will apply the assumption on the local restriction estimate. By the assumption and H\"{o}lder's inequality, we know that
\begin{equation}
    \|\hat{F}\|_{L^{p'}(\mc{N}_{R^{-1}}(S))} \lesssim R^{-(n-d)/p'}R^{\alpha}\|F\|_{L^{s'} (B_R)}
\end{equation}
for every ball $B_R$, where $\mc{N}_{R^{-1}}(S)$ denotes the $R^{-1}$-neighbourhood of $S$. 
By the support condition on $F$, we may write $F$ as $\sum_{i=1}^{N}f_i(x-x_i)$ where $f_i$ is supported on the ball $B_R(0)$. We take a function $\psi$ such that $|\psi| \simeq 1$ on $B_1(0)$ and $\hat{\psi}$ is supported on $B_1(0)$. Define $\psi_R(x)=\psi(x/R) $ and $h_i(x)=f_i(x)/\psi_R(x)$. The function $h_i$ is well-defined because $\psi_R$ is non-vanishing on the support of $f_i$. We now have
$
    F(x)=\sum_{i=1}^N (h_i \psi_R) (x-x_i)$ and the inequality \eqref{sparsereduction} follows from
    \begin{equation}\label{0908(4.7)}
        \Big\| \sum_{i=1}^N e^{ix_i \cdot \xi}(\widehat{h_i}*\widehat{\psi_R})\Big\|_{L^{p'}_{\xi}(d\sigma) } 
        \lesssim
        R^{\alpha}
        \big(\sum_{i=1}^{N}\|h_i\|_{L^{s'}(\R^n) }^{s'} \big)^{1/s'}.
    \end{equation}
Note that for $\xi \in S$ we have
\begin{equation}
    (\widehat{h_i}*\widehat{\psi_R})(\xi)=
    ((\widehat{h_i} \cdot \chi_{\mc{N}_{R^{-1}}(S) }) *\widehat{\psi_R})(\xi)
\end{equation}
by the condition that the support of $\widehat{\psi_R}$ is contained in $B_{R^{-1}}(0)$. By this observation and the assumption that $s>p$, it is not difficult to see that the desired inequality \eqref{0908(4.7)} follows from
\begin{equation}
    \Big\| \sum_{i=1}^N e^{ix_i \cdot \xi}(g_i*\widehat{\psi_R})\Big\|_{L^{p'}_{\xi}(d\sigma) } 
        \lesssim
        R^{(n-d)/p'}
        \big(\sum_{i=1}^{N}\|g_i\|_{L^{p'}(\R^n) }^{p'} \big)^{1/p'}
\end{equation}
for every function $g_i$.
We will interpolate two estimates at $p'=1$ and $p'=2$.

Let us first show the case that $p'=1$. By the triangle inequality, it suffices to show that
\begin{equation}
    \| g_i*\widehat{\psi_R}  \|_{L^1(d\sigma)} \lesssim R^{n-d} \|g_i\|_{L^1(\R^n)}.
\end{equation}
Note that $n-d$ is the co-dimension of the surface $S$. The above inequality immediately follows from the estimate
\begin{equation}\label{0914}
    \sup_{y \in \R^n}\int_S|\widehat{\psi_R}(\xi-y)|d\sigma(\xi) \lesssim R^{n-d}.
\end{equation}
Let us move on to the case that $p'=2$. We need to prove
\begin{equation}\label{0908(4.11)}
    \Big\| \sum_{i=1}^N e^{ix_i \cdot \xi}(g_i*\widehat{\psi_R})\Big\|_{L^{2}_{\xi}(d\sigma) }^2
        \lesssim
        R^{n-d}
       \sum_{i=1}^{N}\|g_i\|_{L^{2}(\R^n) }^{2}.
\end{equation}
We expand the $L^2$-norm on the left hand side. We have a diagonal term and off-diagonal terms.
\begin{equation}
    \sum_{i=1}^{N}\|
    g_i*\widehat{\psi_R}\|_{L^2(d\sigma)}^2+\sum_{i \neq j}\int
    e^{i(x_i-x_j)\cdot \xi}
    (g_i*\widehat{\psi_R})(\xi)\overline{(g_j*\widehat{\psi_R})(\xi)}d\sigma(\xi).
\end{equation}
The first term can be bounded by the right hand side of \eqref{0908(4.11)} by the estimate \eqref{0914}. The second term also can be bounded by the right hand side of \eqref{0908(4.11)} by following the same proof in page 12 of \cite{Kim2017SomeRO}. This gives the proof of the theorem.
\\

We will apply the epsilon removal lemma to surfaces satisfying the (CM) condition.
Let us check that the surface satisfies the Fourier decay condition \eqref{fourierdecay}. In fact, we will show
\begin{equation}\label{fourierdecaycheck}
    |E^S1(x_1,\ldots,x_5)| \leq C_{S}(1+|x|)^{-1/2}
\end{equation}
for any surface $S$ satisfying the (CM) condition. By rotation, we may assume that $x_5=0$. After this change of variables, the resulting pair $(P',Q')$ is still an irreducible pair. In particular, $P' \not\equiv 0$. We now simply apply van der Corput's lemma and obtain the desired estimate \eqref{fourierdecaycheck}.

\section{Proof of Theorem \ref{0822main_thm}: The (D) and $(\Rzero)$ conditions} \label{200914section5}

In this section, we will prove that for $(P, Q)=(\xi_1 \xi_2, \xi_1\xi_3)$, we have 
\begin{equation}
\|E^S f\|_{L^p(B_R)}\le C_p \|f\|_{L^p},    
\end{equation}
for every $p>4$, $R\ge 1$ and some constant $C_p>0$. The proof relies on the bilinear argument of Christ \cite{MR766216}, see also similar argument in Oberlin \cite{MR2078263}, Bourgain and Demeter \cite{MR3447712} and Oh \cite{MR3848437}. To simplify notation, we use $E f$ instead of $E^S f$, and assume $B_R$ is the ball of radius $R$ centered at the origin. 

\medskip

The proof is via an induction argument on $R$. We assume that we have proven
\begin{equation}\label{0823_induction_dr0}
    \|E f\|_{L^p(B_{R'})}\le C_{p}  \|f\|_p,
\end{equation}
for every $R'\le R/2$, and aim to prove that the same estimate holds with $R$ in place of $R'$.

Let $K\ge 1$ be a large dyadic integer that is to be chosen. Let $\L_K$ denote a dyadic rectangular box of dimension $1/K\times 1\times 1$ that is parallel to the hyperplane $\xi_1=0$. Let $\mathfrak{P}(K)$ be the partition of $[0, 1]^3$ into dyadic rectangular boxes $\L_K$. Let $\nu>0$. We say that two sets $\Xi, \Xi'\subset [0, 1]^3$ are $\nu$-transverse if for every $\xi=(\xi_1, \xi_2, \xi_3)\in \Xi$ and $\xi'=(\xi'_1, \xi'_2, \xi'_3)\in \Xi'$, it holds that $|\xi_1-\xi'_1|\ge \nu$. Write 
\begin{equation}
    E_{[0, 1]^3} f(x)=\sum_{\alpha\in \mathfrak{P}(K)} E_{\alpha} f(x), \ \ x\in \R^5. 
\end{equation}
To proceed, we run a simple version of the  broad-narrow analysis from Bourgain and Guth \cite{MR2860188}. For each $x \in \R^5$, we define the significant part
\begin{equation}
    \mathcal{C}(x):=\Big\{\alpha \in \mathfrak{P}(K):|E_{[0,1]^3}f(x)| <10K |E_{\alpha}f(x)|  \Big\}.
\end{equation} 
By considering two possible cases $|\mathcal{C}(x)| \geq 3$ and $|\mathcal{C}(x)| \leq 2$,
we  obtain the following simple pointwise estimate: 
\begin{equation}
    \begin{split}
        & |E_{[0, 1]^3}f(x)| \le C\Big(\sum_{\L_{K/2}}|E_{\L_{K/2}} f(x)|^p \Big)^{1/p}\\
        & + CK^{10} \sum_{\alpha_1, \alpha_2: K^{-1}-\text{transverse}} \prod_{\iota=1}^2 |E_{\alpha_{\iota}}f(x)|^{1/2}.
    \end{split}
\end{equation}
Here $C$ is an absolute constant whose value may vary from line to line, and
the sum in the last term is over rectangular boxes $\L_K$. 
As a consequence, we obtain 
\begin{equation}\label{0824_e5.4}
\begin{split}
    \|E_{[0, 1]^3}f\|_{L^p(B_R)}& \le CK^{10} \sum_{\alpha_1, \alpha_2: K^{-1}-\text{transverse}} \Big\|\prod_{\iota=1}^2 |E_{\alpha_{\iota}} f|^{1/2} \Big\|_{L^p(B_R)}\\
    & + C\Big(\sum_{\L_{K/2}\subset [0, 1]^3} \|E_{\L_{K/2}}f\|_{L^p(B_R)}^p \Big)^{1/p}.
\end{split}
\end{equation}
The latter term will be handled via an anisotropic scaling and induction. Let us first take $\L_{K/2}=[0, 2/K]\times [0, 1]\times [0, 1]$. Write
\begin{equation}\label{200905e6.5}
    E_{\L_{K/2}} f(x)=\int_{0}^{2/K}\int_0^1 \int_0^1 f(\xi) e(\xi\cdot x'+ \xi_1\xi_2 x_4+\xi_1 \xi_3 x_5)d\xi,
\end{equation}
where $x'=(x_1, x_2, x_3)$. We apply the change of coordinates \begin{equation}\label{200905e6.6}
    \xi_1\to 2\xi_1/K, \ \xi_2\to \xi_2, \ \xi_3\to \xi_3,
\end{equation}
and obtain 
\begin{equation}
    \|E_{\L_{K/2}} f\|_{L^p(B_R)}\le CK^{-1} K^{3/p} \|E \widetilde{f}\|_{L^p(B_R)},
\end{equation}
where $\widetilde{f}(\xi_1, \xi_2, \xi_3):=f(2\xi_1/K, \xi_2, \xi_3)$. To
apply the induction hypothesis \eqref{0823_induction_dr0}, we break the ball $B_R$ into smaller balls of radius $R/2$ and obtain 
\begin{equation}
\|E_{\L_{K/2}} f\|_{L^p(B_R)}\le C C_p K^{-1} K^{4/p}\|f_{\L_{K/2}}\|_p,
\end{equation}
where $C$ is an absolute constant. The same estimate also holds for other $\L_{K/2}$. As a consequence, we see that the second term on the right hand side of \eqref{0824_e5.4} can be bounded by 
\begin{equation}\label{200905e6.8}
    C C_p K^{-1} K^{4/p}\Big(\sum_{\L_{K/2}}\|f_{\L_{K/2}}\|_p^p\Big)^{1/p}.
\end{equation}
Recall that $p>4$. We will pick $K$ such that $C K^{-1+4/p}=1/2$.

It remains to handle the first term in \eqref{0824_e5.4}. We will actually prove a stronger estimate: 
\begin{equation}\label{0824_plancherel}
    \Big\||E_{\alpha_1} f|^{1/2} |E_{\alpha_2} f|^{1/2}\Big\|_{L^4(\R^5)} \le C_K \|f\|_2,
\end{equation}
for some constant $C_K$ that is allowed to depend on $K$. We write the $4$-th power of the left hand side of \eqref{0824_plancherel} as 
\begin{equation}\label{0823equ5.10}
    \int_{\R^5} \Big|\iint_{\alpha_1\times \alpha_2} f(\xi)f(\xi') e(x'\cdot (\xi+\xi')+ (\xi_1\xi_2+\xi'_1\xi'_2)x_4+(\xi_1\xi_3+\xi'_1\xi'_3)x_5)d\xi d\xi' \Big|^2 dx,
\end{equation}
where $\xi=(\xi_1, \xi_2, \xi_3)$ and $\xi'=(\xi'_1, \xi'_2. \xi'_3)$. The idea of the following change of variables is classical, and has been used vastly in the literature, for instance, Carleson and Sjolin \cite{MR361607}, Fefferman \cite{MR257819}, and Zygmund \cite{MR387950}. Apply the change of variables 
\begin{equation}
    \begin{split}
        & \xi+\xi'=\eta;\ \ \xi_1=\eta_6;\\ 
        & \xi_1\xi_2+\xi'_1\xi'_2=\eta_4;\\
        & \xi_1\xi_3+\xi'_1\xi'_3=\eta_5,
    \end{split}
\end{equation}
where $\eta=(\eta_1, \eta_2, \eta_3)$. The Jacobian of the change of variable is $J:=|\xi_1-\xi'_1|^2$. The expression in \eqref{0823equ5.10} becomes 
\begin{equation}\label{0823equ5_12}
    \int_{\R^5} \Big|\iint F(\eta) e(x\cdot \eta')d\eta' d\eta_6 \Big|^2 dx,
\end{equation}
where $\eta'=(\eta_1, \dots, \eta_5)$ and $\eta=(\eta', \eta_6)$ for some function $F$. We apply H\"older's inequality to $d\eta_6$ and Plancherel's theorem to the variables $\eta'$, and bound \eqref{0823equ5_12} by  
\begin{equation}
    \lesim_K \int |F(\eta)|^2 d\eta\le C_K \|f\|_2^4
\end{equation}
for some $C_K$ that depends only on $K$. 
In the last step, we have changed variables back. This finishes the proof of \eqref{0824_plancherel}. By interpolating with the trivial $L^1 \rightarrow L^{\infty}$ estimates, we obtain 
\begin{equation}\label{200906e5.16}
    \Norm{\prod_{\iota=1}^2 |E_{\alpha_{\iota}} f|^{1/2}}_{L^p(B_R)}\le C_{p, K} \|f\|_p,
\end{equation}
for every $p\ge 4$. 

\medskip

Now we apply \eqref{200905e6.8} and \eqref{200906e5.16} to \eqref{0824_e5.4} and obtain 
\begin{equation}
    \|E f\|_{L^p(B_R)}\le C_p/2 \|f\|_p+ C_{p, K} K^{20}\|f\|_p.
\end{equation}
In the end, we notice that we just need to pick $C_p$ in \eqref{0823_induction_dr0} to be $C_{p, K} K^{200}$ and we can close the induction.

\section{Proof of Theorem \ref{0822main_thm}: The (D) and $(\Rone)$ conditions}\label{200914section6}

In this section, we will prove that if $(P, Q)$ is equivalent to $(\xi_1\xi_2, \xi_1\xi_3+\xi_2^2)$ or $(\xi_1\xi_2, \xi_1^2\pm\xi_3^2)$, then 
\begin{equation}
    \|E^S f\|_{L^p(B_R)}\le C_p \|f\|_{L^p},
\end{equation}
for every $p>4$, and some constant $C_p$ depending only on $p$. The proofs for these two cases are almost the same, and therefore we will only present that of $(\xi_1\xi_2, \xi_1\xi_3+\xi_2^2)$. 

The proof is via an induction on $R$, and we assume that we have proven 
\begin{equation}
    \|E f\|_{L^p(B_{R'})}\le C_p \|f\|_p,
\end{equation}
for every $R'\le R/2$. To proceed, we follow the broad-narrow analysis of the previous section, and see that \eqref{0824_e5.4} holds as well for the current surface. The bilinear term can again be handled in the same way as in \eqref{0824_plancherel}. So far we have proved that 
\begin{equation}\label{0824_e5.4zz}
\begin{split}
    \|E_{[0, 1]^3}f\|_{L^p(B_R)}& \le C_{p, K} \|f\|_p + C\Big(\sum_{\L_{K/2}\subset [0, 1]^3} \|E_{\L_{K/2}}f\|_{L^p(B_R)}^p \Big)^{1/p},
\end{split}
\end{equation}
for every $p>4$. The difference lies in how to handle to last term. Our new surface possesses a different scaling structure. Let us use the case $\L_{K/2}=[0, 2/K]\times [0, 1]^2$ to explain the argument. The other terms can be handled in more or less the same way. We will show that 
\begin{equation}\label{200905e7.3}
    \|E_{\L_{K/2}}f\|_{L^p(B_R)} \lesim_{p, \epsilon} K^{\frac12(\frac12-\frac1p)+\epsilon} \Big(\sum_{\ell(J)=K^{-1/2}} \|E_{[0, 2/K]\times J\times [0, 1]} f\|_{L^p(w_{B_R})}^p\Big)^{1/p}, 
\end{equation}
for every $6\ge p\ge 4$ and $\epsilon>0$, where $w_{B_R}(x):=(1+|x|/R)^{-500}$ and the sum is over dyadic intervals $J$ of side length $K^{-1/2}$. To see \eqref{200905e7.3}, we notice that on $\L_{K/2}$ we have $|\xi_1|\le 2/K$ and by uncertainty principle, the surface $(\xi_1, \xi_2, \xi_3, \xi_1\xi_2, \xi_1 \xi_3+\xi_2^2)$ behaves the same as $(\xi_1, \xi_2, \xi_3, 0, \xi_2^2)$ on $B_K\subset B_R$. Therefore \eqref{200905e7.3} follows from the decoupling inequality of Bourgain and Demeter \cite{MR3374964} for the parabola. We apply \eqref{200905e7.3} to \eqref{0824_e5.4zz}, and obtain 
\begin{equation}
    \begin{split}
        & \|E_{[0, 1]^3} f\|_{L^p(B_R)}\le C_{p, K}\|f\|_p\\
        & + C_{p, \epsilon} K^{\frac12(\frac12-\frac1p)+\epsilon} \Big(\sum_{\ell(J)=K^{-1/2}} \|E_{[0, 2/K]\times J\times [0, 1]} f\|_{L^p(w_{B_R})}^p\Big)^{1/p}.
    \end{split}
\end{equation}
The rest is the standard induction argument, essentially the same as in \eqref{200905e6.5}--\eqref{200905e6.8}, with the only difference of replacing \eqref{200905e6.6} by 
\begin{equation}
    \xi_1\mapsto \xi_1/K, \xi_2\mapsto \xi_2/\sqrt{K}, \xi_3\mapsto \xi_3.
\end{equation}
We leave out the details.

\section{Proof of Theorem \ref{0822main_thm}: The (CM) condition}\label{200912section7}

In this section, we will prove Theorem \ref{0822main_thm} for every pair $(P, Q)$ that satisfies the (CM) condition. By the reduction in Section \ref{200906section:reduction}, it suffices to prove that 
\begin{equation}\label{0823_cm_restriction}
    \|E^S f\|_{L^p(B_R)} \le C_{p, \epsilon} R^{\epsilon} \|f\|_{L^p},
\end{equation}
for every $p>4, \epsilon>0$, ball $B_R\subset \R^5$ of radius $R$, surface $S$ given by \eqref{0822_surface} with $(P, Q)$ satisfying the (CM) condition. 
As $S$ is fixed throughout this section, we will abbreviate $E^S$ to $E$. 

\medskip

It turns out that there are several ways of proving \eqref{0823_cm_restriction}. Here we will present a proof that has the potential of being further generalized and improved. In this approach, we will use the broad-narrow analysis of \cite{MR2860188}, the linear algebra computations in \cite{MR3945730} that allow us to handle the broad case via Brascamp-Lieb inequalities of \cite{MR3300318} and \cite{MR3783217}, and the bootstrapping argument of \cite{arxiv:1902.03450} that allows us to handle the narrow case.

\subsection{Transversality and multi-linear estimates} Let $d=3$. Let $(V_j)_{j=1}^M$ be a tuple of linear subspaces $V_j\subset 
\R^{d+2}$ of dimension $d$. Let $\pi_j: \R^{d+2}\rightarrow V_j$ denote the orthogonal projection onto $V_j$. We define the Brascamp-Lieb constant $BL((V_j)_{j=1}^M)$ to be the smallest constant $BL$ such that the inequality 
\begin{equation}
    \int_{\R^{d+2}} \prod_{j=1}^M f_j(\pi_j(x))^{\frac{d+2}{dM}}\le (BL)\prod_{j=1}^M \Big(\int_{V_j} f_j(y)dy\Big)^{\frac{d+2}{dM}}, 
\end{equation}
holds for all non-negative functions $f_j: V_j\to \R$. 
\begin{defi}
Let $\nu>0$. A tuple of subsets $R_1, \dots, R_M\subset [0, 1]^d$ is called $\nu$-transverse if 
\begin{equation}
    BL((V(t_j))_{j=1}^M)\le \nu^{-1},
\end{equation}
for every $t_j\in R_j$, where $V(t)$ denotes the tangent space of the surface $S$ at $t\in \R^d$. 
\end{defi}
Bennett, Carbery, Christ and Tao \cite{MR2377493} provided a complete characterization for the finiteness of various Brascamp-Lieb constants, including the one above. To use their characterization, we need the following lemma. 
\begin{lem}[\cite{MR3945730}, \cite{arxiv:1902.03450}]\label{200906lem4.2} Let $S$ be a surface that satisfies the (CM) condition. 
For every subspace $V\subset \R^{d+2}$, one of the following holds: 
\begin{itemize}
    \item[(1)] $\dim \pi_t(V)\ge \frac{d}{d+2} \dim V$ for every $t\in \R^d$, or 
    \item[(2)] $\dim \pi_t(V)>\frac{d}{d+2} \dim V$ for some $t\in \R^d$.
\end{itemize}
Here $\pi_t$ denotes the orthogonal projection onto $V(t)$. 
\end{lem}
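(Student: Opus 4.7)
The plan is to combine an integer-parity observation with a case analysis on $\dim V$. The arithmetic input is that $\tfrac{d}{d+2}\dim V = \tfrac{3}{5}\dim V$ is never an integer for $\dim V \in \{1,2,3,4\}$. Since $\dim \pi_t(V) = \dim V - \dim(V \cap V(t)^\perp)$ is integer-valued and $\dim(V \cap V(t)^\perp)$ is upper semi-continuous in $t$ (it is the nullspace dimension of a matrix whose entries are polynomial in $t$, so the loci where it is large are closed), the dichotomy (1)-or-(2) reduces, for $1 \le \dim V \le 4$, to the single assertion $M := \max_t \dim \pi_t(V) \ge \lceil \tfrac{3}{5}\dim V\rceil$. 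Indeed, if this maximum is at least $\lceil \tfrac{3}{5}\dim V\rceil$ then the strict inequality in (2) is automatic at any $t$ achieving $M$; and if it is strictly smaller then $\dim \pi_t(V) < \tfrac{3}{5}\dim V$ at every $t$, so both (1) and (2) fail. The extremal cases $\dim V = 0$ and $\dim V = 5$ are immediate, since in the second case $\pi_t(V) = V(t)$ has dimension exactly $3$.

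The key structural input from (CM) is $\ker P \cap \ker Q = \{0\}$: a nonzero vector in this intersection would force $\det(x_1 P + x_2 Q) \equiv 0$, contradicting the integrability \eqref{curvature_zz}. Unpacking the explicit parametrisation $V(t)^\perp = \mathrm{span}\{(-\nabla P(t),1,0),(-\nabla Q(t),0,1)\}$, this yields $\bigcap_t V(t) = \{0\}$ and $\bigcap_t V(t)^\perp = \{0\}$, equivalently $\sum_t V(t)^\perp = \R^5$ and $\sum_t V(t) = \R^5$ respectively.

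These two identities resolve the cases $\dim V \in \{1,3,4\}$ directly. For $\dim V = 1$: if $\dim \pi_t(V) = 0$ for every $t$, then $V \subset \bigcap_t V(t)^\perp = \{0\}$, a contradiction. For $\dim V \in \{3,4\}$: the bound $\dim \pi_t(V) \ge \lceil \tfrac{3}{5}\dim V\rceil$ fails only when $\dim(V \cap V(t)^\perp) = 2$, i.e.\ when $V(t)^\perp \subset V$; if this held at every $t$ we would obtain $\R^5 = \sum_t V(t)^\perp \subset V$, contradicting $\dim V \le 4$.

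The main obstacle is $\dim V = 2$, where the target is $V \cap V(t)^\perp = \{0\}$ for some $t$. Both $V$ and $V(t)^\perp$ are two-planes in $\R^5$ and nontrivial intersection is a codimension-one condition in the Grassmannian, so the soft containment argument above does not apply. The plan is to pick a basis $v_1,v_2$ of $V$, write $v_\iota = (v'_\iota,v''_\iota,v'''_\iota) \in \R^3\times\R\times\R$, and observe that $V \cap V(t)^\perp \ne \{0\}$ is equivalent to the simultaneous vanishing of all three $2\times 2$ minors of the $3\times 2$ matrix with columns $w_\iota(t) := v'_\iota + v''_\iota\nabla P(t) + v'''_\iota\nabla Q(t)$. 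These minors are polynomials in $t$ of degree at most two; their identical vanishing would impose strong algebraic relations among $(v_1,v_2)$ and the pair $(P,Q)$. Using the classification in Theorem \ref{thm_classification_of_surfaces} together with the non-vanishing of $\det(x_1P+x_2Q)$ guaranteed by (CM), these relations should be ruled out. I expect this to be the most delicate step and I would use the computations in \cite{MR3945730} and \cite{arxiv:1902.03450} as templates.
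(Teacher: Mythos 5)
The paper does not actually prove this lemma: it is imported wholesale from \cite{MR3945730} and \cite{arxiv:1902.03450} (the text following the lemma points to Subsection~3.2 of the latter, where the statement is verified under the (D) condition, which (CM) implies by Theorem~\ref{thm_classification_of_surfaces}(1)). So you are attempting a self-contained argument that the paper outsources. Your reduction is correct: since $\tfrac35\dim V$ is a non-integer for $\dim V\in\{1,2,3,4\}$, the dichotomy amounts to $\max_t\dim\pi_t(V)\ge\lceil\tfrac35\dim V\rceil$, and the cases $\dim V\in\{0,1,3,4,5\}$ do follow from the two identities $\bigcap_t V(t)^\perp=\{0\}$ (which needs only linear independence of the Hessians) and $\bigcap_t V(t)=\{0\}$ (equivalent to $\ker P\cap\ker Q=\{0\}$, which you correctly extract from \eqref{curvature_zz}).

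The genuine gap is the case $\dim V=2$, which you leave at ``these relations should be ruled out'': that sentence is the entire content of the one case where the curvature hypothesis enters beyond the two soft intersection identities, so as written the proposal does not establish the lemma. The case can be closed within your framework as follows. Normalize $(v''_1,v'''_1),(v''_2,v'''_2)\in\R^2$ by a change of basis of $V$. If both vanish, $w_1,w_2$ are constant and independent. If they span $\R^2$, take them to be $(1,0)$ and $(0,1)$; the degree-two part of the minors of $[w_1(t),w_2(t)]$ is the cross product $\nabla P(t)\times\nabla Q(t)$, and its identical vanishing means $\det[\nabla P(t),\nabla Q(t),w]\equiv 0$ for every $w$, contradicting \eqref{equ_determinant_condition}. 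If they are parallel and not both zero, take $(v''_2,v'''_2)=(0,0)$; then $w_2\equiv v'_2\neq 0$ and $w_1(t)=v'_1+(v''_1P+v'''_1Q)t$, so dependence for all $t$ forces $\rank(v''_1P+v'''_1Q)\le 1$. This is the input you must rule out: if some nonzero real $x_1P+x_2Q$ equals $c\,\ell(\xi)^2$ for a linear form $\ell$, then (assuming $x_1\neq 0$) $\det[\nabla P,\nabla Q,\nabla\ell]=x_1^{-1}\det[2c\,\ell(\xi)\nabla\ell,\nabla Q,\nabla\ell]\equiv 0$, so (D) fails; alternatively, $\rank(P)\le 1$ forces $\mathrm{adj}(P)=0$ and hence $x_2^2\mid\det(x_1P+x_2Q)$, which violates \eqref{curvature_zz} directly. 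Note this step cannot be softened: a rank-one element $x_1P+x_2Q$ with range spanned by $u$ produces the two-plane $V=\mathrm{span}\{(u,0,0),(0,0,0,x_1,x_2)\}$ on which $\dim\pi_t(V)\le 1$ for every $t$, so some such algebraic input is genuinely required.
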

This lemma was implicit in \cite{MR3945730}. For a proof, see Subsection 3.2 of \cite{arxiv:1902.03450}. In \cite{arxiv:1902.03450}, Lemma \ref{200906lem4.2} was formulated as Hypothesis 2.26, and verified in Subsection 3.2 under the assumption that the relevant surface satisfies the (D) condition. As our surface $S$ satisfies the (CM) condition, which is stronger than the (D) condition, we also have Lemma \ref{200906lem4.2} in the current situation.

As a consequence of Lemma \ref{200906lem4.2}, we have the following lemma, which provides a criteria  for checking whether a collection of subsets of $[0, 1]^d$ is transverse or not. Let $K>0$ be a large dyadic number. Let $\mc{P}(1/K)$ be the partition of $[0, 1]^d$ into dyadic cubes of side length $1/K$. 

\begin{lem}[Lemma 2.27 \cite{arxiv:1902.03450}]\label{200906lem7.3}
There exists $\theta>0$ such that the following holds: For every $K$, there exists $\nu_K>0$ such that for every collection $\mc{R}\subset \mc{P}(1/K)$, one of the following alternatives holds:
\begin{itemize}
    \item[(1)] $\mc{R}$ is $\nu_K$-transverse, or 
    \item[(2)] There exists a sub-variety $\mc{Z}$ of degree at most $d$ such that 
    \begin{equation}
        |\{R\in \mc{R}: 2R\cap \mc{Z}\neq \emptyset\}|> \theta |\mc{R}|. 
    \end{equation}
\end{itemize}
\end{lem}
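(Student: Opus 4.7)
The plan is to combine the Bennett--Carbery--Christ--Tao (BCCT) characterization of finiteness of the Brascamp--Lieb constant with the dimension dichotomy of Lemma \ref{200906lem4.2}, mimicking the argument of \cite{arxiv:1902.03450}. Write $M := |\mc{R}|$. By BCCT, the constant $BL((V(t_j))_{j=1}^M)$ for the exponents $p_j = \tfrac{d+2}{dM}$ is finite if and only if the scaling identity (automatic here) together with the dimension conditions
\[
\sum_{j=1}^M \dim \pi_{V(t_j)} W \;\ge\; \tfrac{dM}{d+2}\dim W
\]
hold for every subspace $W\subset \R^{d+2}$. Part (1) of Lemma \ref{200906lem4.2} provides the uniform lower bound $\dim \pi_{V(t)} W \ge \tfrac{d}{d+2}\dim W$ for every $t$, so summing over $j$ the dimension condition is automatic and BL is pointwise finite. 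Moreover, BCCT's continuity theorem says BL depends continuously on $(V(t_j))$ on the open region where every dimension condition is strict, hence is uniformly bounded on compact sub-regions that stay away from the boundary.

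For each candidate witness $W\subset \R^{d+2}$ I would introduce the bad locus
\[
\mc{Z}(W) \;:=\; \big\{t\in \R^d : \dim \pi_{V(t)} W = \lceil \tfrac{d}{d+2}\dim W \rceil \big\},
\]
i.e.\ the set where the bound in Lemma \ref{200906lem4.2}(1) is saturated. Since $P$ and $Q$ are quadratic, $V(t)$ admits a basis of vectors whose entries are affine in $t$, so the minors cutting out $\mc{Z}(W)$ are polynomials of degree at most $d$ in $t$; hence $\mc{Z}(W)$ is an algebraic sub-variety of degree $\le d$. Part (2) of Lemma \ref{200906lem4.2} is precisely what guarantees that whenever $W$ can actually witness a BL degeneracy, $\mc{Z}(W)$ is a \emph{proper} sub-variety of $\R^d$ rather than all of $\R^d$.

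The dichotomy then runs as follows: fix $K$ and $\mc{R}\subset \mc{P}(1/K)$, and suppose $\mc{R}$ is not $\nu_K$-transverse for $\nu_K>0$ to be chosen in terms of $d$ and the finite family $\mc{P}(1/K)$. Then there exist $t_j\in R_j$ with $BL((V(t_j))_j) > \nu_K^{-1}$; letting $\nu_K\to 0$, compactness of $\prod_j \overline{R_j}$ together with lower semicontinuity of BL yields a limiting configuration $(t_j^*)$ at which BL degenerates. BCCT then produces a subspace $W^*$ saturating the dimension condition, $\sum_j \dim \pi_{V(t_j^*)} W^* = \tfrac{dM}{d+2}\dim W^*$. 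Combined with the uniform lower bound from Lemma \ref{200906lem4.2}(1), this forces equality $\dim \pi_{V(t_j^*)} W^* = \tfrac{d}{d+2}\dim W^*$ for \emph{every} $j$, i.e.\ $t_j^* \in \mc{Z}(W^*)$, so $2R_j \cap \mc{Z}(W^*)\neq \emptyset$ for every $R_j\in \mc{R}$; alternative (2) holds with any $\theta<1$. I expect the main obstacle to be making this compactness argument quantitative enough to produce a single $\nu_K>0$ valid for every $\mc{R}\subset \mc{P}(1/K)$; this is handled by observing that candidate witnesses $W^*$ range over the compact Grassmannians $G(k,d+2)$ and that $\mc{P}(1/K)$ is a finite family, so a final minimization over the relevant finite/compact parameter spaces produces the required $\nu_K$, with $\theta$ depending only on $d$.
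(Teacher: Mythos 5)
First, a remark on provenance: the paper does not prove this lemma at all --- it is quoted verbatim as Lemma 2.27 of \cite{arxiv:1902.03450} --- so your proposal can only be compared with the argument in that reference. Your overall strategy (BCCT characterization, compactness over $\prod_j\overline{R_j}$, Lemma \ref{200906lem4.2}, and confinement of the bad parameters to a bounded-degree variety) is indeed the right skeleton, but the execution has two genuine gaps.

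The first and main gap is your reading of Lemma \ref{200906lem4.2}. You treat alternative (1) as holding unconditionally (``Part (1) provides the uniform lower bound \dots so \dots BL is pointwise finite''), but the lemma is a dichotomy: for some subspaces $W$ only (2) holds, and for those $W$ the BCCT dimension condition can genuinely fail at bad $t$. If (1) held for every $W$, then BL would be finite at every configuration, hence (by continuity/local boundedness of the BL constant) uniformly bounded on each compact product $\prod_j\overline{R_j}$, every $\mc{R}$ would be transverse, and the lemma would be vacuous. Consistently with this, at your limiting configuration BCCT produces a $W^*$ that \emph{strictly violates} the dimension condition, $\sum_j\dim\pi_{t_j^*}(W^*)<\tfrac{dM}{d+2}\dim W^*$, not one that saturates it; and from a strict violation one cannot conclude that every $t_j^*$ lies in the bad locus. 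This is exactly where the quantitative content of the lemma lives. Writing $\kappa(W^*):=\max_t\dim\pi_t(W^*)$, alternative (2) (which must hold whenever (1) fails) together with integrality forces $\kappa(W^*)\ge \tfrac{d}{d+2}\dim W^*+\tfrac{1}{d+2}$, and the counting bound $\sum_j\dim\pi_{t_j^*}(W^*)\ge (M-m)\,\kappa(W^*)$, where $m$ is the number of indices with $t_j^*$ in the rank-drop locus, then yields $m\ge M/(d(d+2))$, i.e.\ a definite $\theta=\theta(d)$. Your conclusion ``alternative (2) holds with any $\theta<1$'' is both too strong and not derivable; the case $\kappa(W)=\tfrac{d}{d+2}\dim W$ must also be disposed of separately via alternative (1) (there the dimension condition holds with equality everywhere and no variety is produced, but none is needed). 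The second gap is your bad locus: the level set $\{t:\dim\pi_{V(t)}W=\lceil\tfrac{d}{d+2}\dim W\rceil\}$ is only locally closed, not a subvariety. The correct object is the rank-drop locus $\{t:\dim\pi_{V(t)}(W)<\kappa(W)\}=\{t:\rank(A(t)^TB)<\kappa(W)\}$, where $A(t)$ and $B$ are basis matrices for $V(t)$ and $W$; this is cut out by minors of size at most $d$ of a matrix with entries affine in $t$, hence is a proper subvariety of degree at most $d$, as required. (A minor point: the semicontinuity you need in the limiting argument is the upper one, so that $BL(t^{(k)})\to\infty$ forces $BL(t^*)=\infty$.)
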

We should also mention that analogues of this lemma already appeared in \cite{MR3614930}, \cite{MR3709122}, \cite{MR3994585} and \cite{arxiv:1811.02207}. 

\begin{lem}[\cite{MR3300318}, Theorem 1.3 \cite{MR3783217}]\label{200906lem7.4}
Let $\{R_j\}_{j=1}^M$ be a collection of cubes in $\mc{P}(1/K)$ that are $\nu_K$-transverse. Then 
\begin{equation}
    \Big\|\prod_{j=1}^M|E_{R_j} f|^{1/M}\Big\|_{L^p(B_R)}\lesim_{K, p, \epsilon'} R^{\epsilon'} \|f\|_2, 
\end{equation}
for every $p>10/3, \epsilon'>0$ and ball $B_R\subset \R^{d+2}$ with $d=3$. 
\end{lem}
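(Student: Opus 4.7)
The plan is to reduce this lemma to the nonlinear multilinear restriction theorem of Bennett--Bez--Flock--Lee \cite{MR3783217}, invoked with the tangent planes of $S$ as the Brascamp--Lieb data. The $\nu_K$-transversality assumption is engineered precisely for this purpose: by definition it asserts that the Brascamp--Lieb constant of the tangent planes $(V(t_j))_{j=1}^M$ is bounded by $\nu_K^{-1}$ for every choice of $t_j \in R_j$, which is exactly the hypothesis required in \cite{MR3783217}.

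First I would upgrade this pointwise control to a uniform one. By the continuity of the Brascamp--Lieb constant in its data, established by Bennett--Carbery--Christ--Tao in \cite{MR2377493}, combined with the compactness of each closed cube $R_j$, the BL constants associated to any tuple of tangent planes remain uniformly bounded, with a bound depending only on $K$, as the base points $(t_1,\dots,t_M)$ range over $R_1\times\cdots\times R_M$. One also uses here that $S$ is smooth so that $t\mapsto V(t)$ is continuous.

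Second, I would apply the nonlinear multilinear restriction theorem directly. Under the uniform BL bound, it supplies the endpoint estimate
\begin{equation*}
    \Big\|\prod_{j=1}^M|E_{R_j} f|^{1/M}\Big\|_{L^{p_0}(B_R)} \lesim_{K, \epsilon'} R^{\epsilon'} \|f\|_2,
\end{equation*}
with $p_0 = 2(d+2)/d = 10/3$ for $d = 3$. To extend to the full range $p>10/3$, I would interpolate with the trivial bound
\begin{equation*}
    \Big\|\prod_{j=1}^M |E_{R_j} f|^{1/M}\Big\|_{L^{\infty}(\R^{d+2})} \lesim_K \|f\|_2,
\end{equation*}
which follows from $\|E_{R_j} f\|_{\infty} \leq \|f_{R_j}\|_1 \leq |R_j|^{1/2}\|f\|_2$ on each factor together with $|R_j|=K^{-d}$. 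Real interpolation between the $L^{10/3}$ bound and this $L^{\infty}$ bound yields the desired estimate for every $p > 10/3$, with the $R^{\epsilon'}$ loss preserved.

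The main obstacle, already surmounted in the literature, is the nonlinear Brascamp--Lieb inequality and the induction-on-scales scheme inside \cite{MR3783217}; here it is invoked as a black box. The only conceptual check is that our transversality condition matches the BL-constant hypothesis of that theorem, which is immediate from the way $\nu_K$-transversality has been defined. Everything else is routine interpolation.
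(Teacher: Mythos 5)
Your derivation is correct and coincides with how the paper treats this lemma: the result is quoted directly from \cite{MR3300318} and Theorem 1.3 of \cite{MR3783217}, with the endpoint $p_0=2(d+2)/d=10/3$ supplied by the Brascamp--Lieb-based multilinear restriction theorem and the full range $p>10/3$ following from log-convexity of $L^p$ norms on $B_R$ against the trivial bound $\|E_{R_j}f\|_\infty\le |R_j|^{1/2}\|f\|_2$, exactly as you describe. The only superfluous step is your ``upgrade to uniformity'': the definition of $\nu_K$-transversality already requires $BL((V(t_j))_{j=1}^M)\le \nu_K^{-1}$ for \emph{every} choice of $t_j\in R_j$, so the uniform bound over base points is part of the hypothesis rather than something to be derived from continuity and compactness.
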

This lemma suggests that we may expect \eqref{restrictionestimate} to hold for $q>10/3$, whenever $S$ satisfies the (CM) condition.

\subsection{Decoupling constants and lower dimensional contributions} Let $d_1\ge 2$ be an integer. Let $P_1(\xi), Q_1(\xi)$ be two quadratic forms depending on $\xi\in \R^{d_1}$. Let $S_1$ be the surface $\{(\xi, P_1(\xi), Q_1(\xi)): \xi\in [0, 1]^{d_1}\}$. For $p\ge 2$ and dyadic $\delta\in (0, 1)$, let $\dec_{S_1}(\delta, p)$ be the smallest constant $\dec$ such that 
\begin{equation}
    \|E^{S_1}_{[0, 1]^{d_1}} f\|_{L^{p}(\R^{d_1+2})}\le (\dec) \Big(\sum_{\ell(\Box)=\delta}\|E^{S_1}_{\Box} f\|_{L^p(\R^{d_1+2})}^p\Big)^{1/p},
\end{equation}
for every function $f$, where the sum on the right hand side runs through all dyadic cubes of side length $\delta$. 

Let us turn to our surface $S$. Recall that $d=3$. Let $H$ be a hyperplane in $\R^d$ that passes through the origin. By restricting $S$ to $H$, we obtain a two-dimensional quadratic surface in $\R^4$. Let us use $S|_H$ to denote this surface. Let $\Lambda>0$ be such that 
\begin{equation}\label{200906e4.6}
    \dec_{S|_H}(\delta, p)\lesim_{\Lambda, p} \delta^{-\Lambda}, \text{ for every } \delta\in (0, 1), \text{ uniformly in } H.  
\end{equation}
The following lemma allows us to handle the narrow case in the forthcoming broad-narrow analysis of Bourgain and Guth \cite{MR2860188}. 

\begin{lem}[Corollary 2.18 \cite{arxiv:1902.03450}]\label{200906lem7.5}
For every $D_0\ge 1$ and $\epsilon'>0$, there exists $c=c(D_0, \epsilon')>0$ such that, for every sufficiently large $K$, there exist
\begin{equation}\label{200906e7.8}
    K^c\le K_1\le \dots \le K_{D_0}\le \sqrt{K}
\end{equation}
such that, for every non-zero polynomial $P_{D_0}$ of degree $D_0$, there exist collections of pairwise disjoint cubes $\mc{G}_j\subset \mc{P}(1/K_j), j=1, 2, \dots, D_0$, such that
\begin{equation}
    \mc{N}_{1/K}(\mc{Z}(P_{D_0}))\cap [0, 1]^d\subset \bigcup_{j=1}^{D_0}\bigcup_{\Box\in \mc{G}_j} \Box,
\end{equation}
and 
\begin{equation}\label{200906e7.10}
    \|\sum_{\Box\in \mc{G}_j} f_{\Box}\|_p \lesim_{D_0, \epsilon'} K_j^{\Lambda+\epsilon'}\Big(\sum_{\Box\in \mc{G}_j} \|f_{\Box}\|_p^p \Big)^{1/p}.
\end{equation}
Here $\mc{Z}(P_{D_0})$ denotes the zero set of $P_{D_0}$, and $\mc{N}_{1/K}(\mc{Z})$ denotes the $1/K$-neighbourhood of $\mc{Z}$. 
\end{lem}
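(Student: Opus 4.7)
The strategy, which follows the approach of \cite{arxiv:1902.03450}, is to combine a multi-scale algebraic covering of $\mc{N}_{1/K}(\mc{Z}(P_{D_0})) \cap [0,1]^d$ with the hyperplane-restricted decoupling hypothesis \eqref{200906e4.6}.

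First, I would fix a geometric sequence of scales $K^{c} = K_1 < K_2 < \cdots < K_{D_0} \leq \sqrt{K}$, with consecutive ratios $K_{j+1}/K_j \geq K^{\tau}$ for a small parameter $\tau = \tau(\epsilon', D_0) > 0$ chosen so that the geometric gaps can absorb all of the $O(1)$ losses below. These are the $D_0$ levels at which successive ``linearizations'' of the variety $\mc{Z}(P_{D_0})$ will take place. The collections $\mc{G}_j$ are then built iteratively in $j$: at step $j$ one partitions the part of $\mc{N}_{1/K}(\mc{Z}(P_{D_0}))$ not yet covered into dyadic cubes $\Box$ of side $1/K_j$, and declares $\Box\in \mc{G}_j$ exactly when the variety, restricted to $2\Box$, is contained in an $O(1/K_j)$-slab around some affine hyperplane $H_\Box$. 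The remaining cubes, on which the variety is genuinely curved at scale $1/K_j$, are confined to a subvariety of strictly smaller complexity (essentially the locus where $\nabla P_{D_0}$ fails to single out a dominant direction at that scale), and are passed to the next iteration. After $D_0$ iterations the residual subvariety is trivial and the covering is complete, which also forces $D_0$ and $\tau$ to determine $c$.

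Second, to establish \eqref{200906e7.10}, fix $j$ and group the cubes in $\mc{G}_j$ by the coarsened normal direction of $H_\Box$. Within each group, rescale each $\Box$ to $[0,1]^d$. The quadratic parts of $P, Q$ contribute curvature at most $1/K_j^2 \leq 1/K$ across $\Box$, which is below the decoupling scale and can be ignored; what remains is, up to an affine change of variable, the tensor product of the two-dimensional surface $S|_{H_\Box} \subset \R^4$ with a flat transverse direction. The transverse direction decouples trivially, and the hyperplane factor is controlled by $\dec_{S|_{H_\Box}}(1/K_j, p) \lesim_{\Lambda, p} K_j^{\Lambda+\epsilon'}$ by the hypothesis \eqref{200906e4.6}. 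Summing over the $O_\epsilon(1)$ groups yields \eqref{200906e7.10}.

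The main obstacle will be the first step: quantifying the notion ``essentially flat at scale $1/K_j$'' and verifying that the non-flat residual after one iteration is algebraic of strictly smaller degree, so that the procedure halts after exactly $D_0$ steps with an exponent $c = c(D_0, \epsilon')$ depending only on $D_0$ and the separation ratio $\tau$. This is the technical heart of the argument in \cite{arxiv:1902.03450} and relies on quantitative algebraic geometry --- an effective version of Wongkew's volume bound for tubular neighborhoods of varieties, combined with Bezout-type estimates applied to $P_{D_0}$ and its partial derivatives --- to guarantee uniformity over the polynomial $P_{D_0}$ of degree $D_0$.
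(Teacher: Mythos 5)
You should first note that the paper does not prove this lemma at all: it is quoted verbatim as Corollary 2.18 of \cite{arxiv:1902.03450}, so there is no in-paper argument to compare against. Your two-part strategy (an iterative multi-scale covering of $\mc{N}_{1/K}(\mc{Z}(P_{D_0}))$ by ``flat'' cubes, followed by an appeal to the hyperplane decoupling hypothesis \eqref{200906e4.6}) is indeed the strategy of the cited reference, and your honest flagging of the covering/termination step as the technical heart is appropriate, though it means that part of your write-up is a pointer rather than a proof.

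There is, however, a concrete gap in your derivation of \eqref{200906e7.10}. You propose to ``group the cubes in $\mc{G}_j$ by the coarsened normal direction of $H_\Box$'' into $O_\epsilon(1)$ groups and then apply $\dec_{S|_{H}}(1/K_j,p)$ to each group. A group of cubes whose normals lie in a fixed $O(1)$-cone is \emph{not} contained in a single slab of width $O(1/K_j)$: such cubes can sit at $\sim K_j$ different heights along the common normal (e.g.\ along a curved sheet of $\mc{Z}$, or near many parallel tangent planes), and decoupling into the parallel slabs of a foliation is not free --- flat decoupling into $K_j$ parallel slabs already costs a positive power of $K_j$ unrelated to $\Lambda$. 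The cylindrical (Fubini) reduction to $\dec_{S|_H}$ only applies once the frequencies in question lie in a \emph{single} $O(1/K_j)$-neighbourhood of a hyperplane. This is exactly where the multi-scale structure $K^c\le K_1\le\dots\le K_{D_0}\le \sqrt K$ is used: one decouples scale by scale, using that inside a cube of the previous scale $\rho$ the flat part of the variety lies within $O(D_0^2\rho^2)$ of its tangent hyperplane (hence within one slab of width $\lesssim 1/K_j$ when the scales are chosen with $K_j\lesssim K_{j-1}^2$), applies the cylindrical decoupling there, and lets the constants telescope to $K_j^{\Lambda+\epsilon'}$. As written, your argument does not justify the bound \eqref{200906e7.10}. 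A secondary, smaller issue: the residual set after removing the flat cubes is contained in a neighbourhood of the locus where suitable derivatives of $P_{D_0}$ degenerate, which is a variety of higher \emph{codimension} cut out by polynomials of degree $\le D_0$, not obviously a hypersurface of ``strictly smaller degree''; the induction in the reference has to be set up with some care on this point.
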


Let us explain how this lemma will be applied later. Recall that our surface $S$ satisfies the (CM) condition, and therefore it satisfies the $(R_2)$ condition. By the definition of the $(R_2)$ condition in \eqref{equ_rank_condition}, we know that 
\begin{equation}
     \min_{\text{ hyperplane } H\subset \R^3} \max_{\lambda_1^2+ \lambda_2^2=1} \rank\Big( (\lambda_1 P(\cdot)+\lambda_2 Q(\cdot))|_{H}\Big)=2.
\end{equation}
Via a simple compactness argument, we see that for every hyperplane $H\subset \R^3$, there exist $\lambda_1$ and $\lambda_2$ with $\lambda_1^2+\lambda_2^2=1$ such that 
\begin{equation}
    \det
    \Big(\mathrm{Hessian}\Big( (\lambda_1 P(\cdot)+\lambda_2 Q(\cdot))|_{H}\Big)\Big)\simeq 1,
\end{equation}
where the implicit constant in the last estimate depends only on $P$ and $Q$, and in particular, does not depend on $H$. This, combined with the 
decoupling inequalities for paraboloids and hyperbolic paraboloids due to Bourgain and Demeter in  \cite{MR3374964} and \cite{MR3736493}, implies that   \eqref{200906e4.6} holds with $\Lambda=2(1-3/p)+\epsilon'$ for every $p\ge 4$ and every $\epsilon'>0$.

\subsection{Broad-narrow analysis of Bourgain and Guth.}  
In this subsection, we run the Bourgain-Guth argument in \cite{MR2860188} to finish the proof of \eqref{0823_cm_restriction}. The proof is via an induction on $R$, and therefore we assume that we have proven 
\begin{equation}
    \|E f\|_{L^p(B_{R'})}\le C_{p, \epsilon} (R')^{\epsilon} \|f\|_{L^p}, 
\end{equation}
for every $R'\le R/2$. Let $K$ be a large dyadic number that is to be chosen. Write 
\begin{equation}
    E_{[0, 1]^d}f(x)=\sum_{\alpha\in \mc{P}(1/K)} E_{\alpha} f(x).
\end{equation}
We essentially follow the proof of Proposition 2.35 \cite{arxiv:1902.03450}. Fix $B'=B_K\subset B_R$, a ball of radius $K$. By the uncertainty principle, $|E_{\alpha}f|$ is essentially a constant on $B'$. We use $|E_{\alpha}f|(B')$ to denote this constant. Initialize 
\begin{equation}
    \S_0:=\{\alpha\in \P(K^{-1}): |E_{\alpha}f|(B')\ge K^{-d} \max_{\alpha'\in \P(K^{-1})} |E_{\alpha'} f|(B')\}.
\end{equation}
We repeat the following algorithm. Let $w\ge 0$. If $\S_w=\emptyset$ or $\S_w$ is $\nu_K$-transverse, then we set $\mc{T}:=\S_{w}$ and terminate. Otherwise, by Lemma \ref{200906lem7.3}, there exists a sub-variety $\mc{Z}$ of degree at most $d$ such that 
\begin{equation}
    |\{\alpha\in \S_w: 2\alpha\cap \mc{Z}\neq \emptyset\}|\ge \theta |\S_w|.
\end{equation}
Let $\mc{G}_{w, j}:=\mc{G}_j$ be given by Lemma \ref{200906lem7.5}. Repeat the algorithm with 
\begin{equation}
    \S_{w+1}:=\S_w\setminus \bigcup_{j=1}^3 \bigcup_{\Box\in \mc{G}_{w, j}} \P(\Box, 1/K).
\end{equation}
This algorithm terminates after $O(\log K)$ steps. 

Now we process the cubes in $\mc{G}_{w, j}$ and in $\mc{T}$. To avoid multiple counting, we define 
\begin{equation}
\widetilde{\mc{G}}_{w, j}:=\Big( \mc{G}_{w, j}\setminus \bigcup_{0\le w'<w} \mc{G}_{w', j}\Big)\setminus \bigcup_{1\le j'<j}\bigcup_{w'} \bigcup_{\Box\in \mc{G}_{w', j'}} \P(\Box, 1/K_j).
\end{equation}
Under the above notation, we obtain 
\begin{equation}\label{200906e7.17}
    \begin{split}
        & \|E f\|_{L^p(B')} \le C\Big(\sum_{\alpha\in \P(1/K)} \|E_{\alpha} f\|_{L^p(B')}^p \Big)^{1/p}\\
        & + C\sum_{w\lesim \log K} \sum_{j=1}^3 \Big\| \sum_{\beta\in \widetilde{\mc{G}}_{w, j}} E_{\beta} f\Big\|_{L^p(B')}+ K^{100}  \Big\|\prod_{\alpha\in \mc{T}} |E_{\alpha} f|^{\frac{1}{|\mc{T}|}}\Big\|_{L^p(B')}.
    \end{split}
\end{equation}
By a simple localization argument and the estimate \eqref{200906e7.10}, we see that 
\begin{equation}\label{200906e7.18}
    \Big\| \sum_{\beta\in \widetilde{\mc{G}}_{w, j}} E_{\beta} f\Big\|_{L^p(B')} \lesim_{\epsilon', p} K_j^{2(1-3/p)+\epsilon'} \Big(\sum_{\beta}\|E_{\beta} f\|_{L^p(w_{B'})}^p \Big)^{1/p},
\end{equation}
for every $\epsilon'>0$ and $p\ge 4$, where $w_{B'}(x):=(1+|x-c_{B'}|/r_{B'})^{-500}$ with $c_{B'}$ the center of $B'$ and $r_{B'}$ it radius. Now we apply \eqref{200906e7.18} to \eqref{200906e7.17}, sum over all balls $B'\subset B_R$, and obtain 
\begin{equation}\label{200906e7.19}
    \begin{split}
        & \|E f\|_{L^p(B_R)} \le C\Big(\sum_{\alpha\in \P(1/K)} \|E_{\alpha} f\|_{L^p(B_R)}^p \Big)^{1/p}\\
        & +  \sum_{j=1}^3 C_{p, \epsilon'} K_j^{2(1-3/p)+\epsilon'} \Big(\sum_{\beta\in \P(1/K_j)}\|E_{\beta} f\|_{L^p(w_{B_R})}^p \Big)^{1/p}\\
        & + K^{100} \sum_{1\le M\le K^3} \sum_{\substack{\alpha_1, \dots, \alpha_M\in \P(1/K)\\ \nu_K-\text{transverse}}}  \Big\|\prod_{j=1}^M  |E_{\alpha_j} f|^{\frac{1}{M}}\Big\|_{L^p(B_R)}.
    \end{split}
\end{equation}
Here we have used \eqref{200906e7.8} and absorbed $\log K$ by $K_j^{\epsilon'}$. At this moment, it will be clear how we proceed: We apply induction to the first and second terms, in the same way as in \eqref{200905e6.5}--\eqref{200905e6.8}, and the multi-linear restriction estimates in Lemma \ref{200906lem7.4} to the last term. The details are left out.

\medskip

\section{Proof of Theorem \ref{0822main_thm}: The last case}\label{200914section8}

In this section, we prove Theorem \ref{0822main_thm} for the case where the (D) condition fails. According to Theorem \ref{thm_classification_of_surfaces}, we need to consider two surfaces $(\xi_1, \xi_2, \xi_3, \xi_1^2\pm \xi_2^2, \xi_3^2)$ and $(\xi_1, \xi_2, \xi_3, \xi_1 \xi_2+\xi_3^2, \xi_1^2)$. The desired restriction estimate \eqref{restrictionestimate} for the former surface at $p=q>4$ follows from well-known restriction estimates for the parabola $(\xi_3, \xi_3^2)$ and the (hyperbolic) paraboloid $(\xi_1, \xi_2, \xi_1^2\pm \xi_2^2)$. Therefore we only need to consider the latter surface. Throughout this section, we use $S$ to denote the surface under consideration.  The proof of \eqref{restrictionestimate} for such a surface contains the main novelty of the paper. We continue to use $Ef$ to stand for $E^{S}f$ for the sake of simplicity.

\medskip

Let us start with defining several constants.

\begin{defi}
Let $p\ge 2$ be given. Let $D_1(R, p)$ be the smallest constant such that
\begin{equation}\label{200901e7.1}
    \|E_{[0,1]^3}f\|_{L^p(B_R)} \leq D_1(R, p)\|f\|_{L^p(\R^3)},
\end{equation}
for every $f: [0,1]^3 \rightarrow \C$ and ball $B_R\subset \R^5$ of radius $R$.
Define $D_2(M,N,R, p)$ to be the smallest constant such that
\begin{equation}\label{200901e7.2}
    \|E_{I \times [0,1] \times J }f\|_{L^p(B_R)} \leq D_2(M, N, R, p)\|f\|_{L^p(\R^3)}
\end{equation}
for every $f:[0,1]^3 \rightarrow \C$, ball $B_R\subset \R^5$ and intervals $I, J\subset [0, 1]$ with side length $M^{-1}$ and $N^{-1}$, respectively. 
\end{defi}

By interpolation with a trivial $L^1\rightarrow L^{\infty}$ bound, what we need to prove becomes 
\begin{equation}\label{d1bound}
    D_1(R, p) \leq C_{p}
\end{equation}
for every $p>4$ and $R\ge 1$. As $p>4$ will be fixed throughout this section, we will suppress the dependence of $D_1$ and $D_2$ on it.  Note that
\begin{equation}\label{domination}
    D_1(R)=D_2(1,1,R)
\end{equation}
and
\begin{equation}\label{domination2}
    D_2(M_1,N_1,R) \leq D_2(M_2,N_2,R)
\end{equation}
whenever $M_1 \geq M_2$ and $N_1 \geq N_2$.

\subsection{Useful estimates. }
As intermediate steps of proving \eqref{d1bound}, we need to appeal to restriction estimates for certain lower dimensional surfaces. If we restrict our surface $S$  to the $\xi_2\xi_3$-plane, then our surface becomes $\{(\xi_2,\xi_3,\xi_3^2)\}$. In this case, we will apply well known restriction estimates for the parabola. If we restrict $S$ to the $\xi_1\xi_2$-plane, then the surface becomes $S_6:=\{(\xi_1,\xi_2,\xi_1^2,\xi_1\xi_2) \}$. The following proposition records a sharp restriction estimate for such a surface. 

\begin{prop}[A restriction estimate for the surface $S_6$]\label{lowerdim}
For $q_1>5$ and $1/p_1+4/q_1<1$, it holds that
\begin{equation}\label{200831e7.6}
    \|E^{S_6}f\|_{L^{q_1}(\R^4 )} \leq C_{p_1,q_1}\|f\|_{L^{p_1}(\R^2)}.
\end{equation}
Moreover, the range for $p_1$ and $q_1$ is sharp, up to endpoints. 
\end{prop}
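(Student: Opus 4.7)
The plan is to adapt the broad-narrow-plus-rescaling induction of Section \ref{200914section5} to the two-dimensional surface $S_6 \subset \R^4$. Transversality of $S_6$ is available only in the $\xi_1$ direction, and the narrow case is closed via a parabolic rescaling driven by the $\xi_1^2$ coordinate. After interpolation with the trivial $L^1 \to L^\infty$ estimate and the H\"older inequality $\|f\|_{q_1} \leq \|f\|_{p_1}$ on $[0,1]^2$ (valid for $p_1 \geq q_1$), the full range reduces to the diagonal $p_1 = q_1 > 5$. I define $D(R)$ as the smallest constant in $\|E^{S_6} f\|_{L^{q_1}(B_R)} \leq D(R) \|f\|_{L^{q_1}}$ and prove $D(R) \leq C_{q_1}$ uniformly by induction on $R$.

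For the broad piece, I would repeat the Plancherel argument of \eqref{0824_plancherel} verbatim: given two $K^{-1}$-transverse $\xi_1$-slabs $J_1 \times [0,1]$ and $J_2 \times [0,1]$ of width $K^{-1}$, expand the fourth power of the $L^4$-norm of $|E^{S_6}_{J_1 \times [0,1]} f|^{1/2}|E^{S_6}_{J_2 \times [0,1]} f|^{1/2}$, apply the change of variables
\begin{equation*}
\eta_1 = \xi_1 + \xi'_1, \quad \eta_2 = \xi_2 + \xi'_2, \quad \eta_3 = \xi_1^2 + (\xi'_1)^2, \quad \eta_4 = \xi_1 \xi_2 + \xi'_1 \xi'_2,
\end{equation*}
whose Jacobian $2(\xi_1 - \xi'_1)^2$ is $\gtrsim K^{-2}$, and use Plancherel in $\eta$ to obtain the bound $CK^{1/2}\|f\|_2$. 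Interpolation with the trivial $L^1 \to L^\infty$ bilinear estimate together with $\|f\|_1, \|f\|_2 \leq \|f\|_{q_1}$ on $[0,1]^2$ then promotes this to $C_{K, q_1} \|f\|_{q_1}$ in $L^{q_1}(B_R)$ for every $q_1 \geq 4$. For the narrow piece, the anisotropic rescaling $\xi_1 = \xi_1^0 + \tilde\xi_1/K$ (with $\xi_2$ unchanged) together with the companion substitution
\begin{equation*}
y_1 = (x_1 + 2\xi_1^0 x_3)/K, \quad y_2 = x_2 + \xi_1^0 x_4, \quad y_3 = x_3/K^2, \quad y_4 = x_4/K
\end{equation*}
transforms $E^{S_6}_{J \times [0,1]} f(x)$ into $K^{-1} E^{S_6} \tilde f(y)$ up to a unimodular factor, exhibiting a two-parameter self-similarity of $S_6$. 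Since the image of $B_R$ under $y$ fits in a box covered by $O(1)$ balls of radius $R/2$, a direct norm computation combined with the inductive hypothesis yields $\|E^{S_6}_{J \times [0,1]} f\|_{L^{q_1}(B_R)} \leq CD(R/2)K^{(5-q_1)/q_1} \|f_{J \times [0,1]}\|_{q_1}$.

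Summing the $q_1$-th powers over the $K$ slabs produces a narrow contribution of $CD(R/2)K^{(5-q_1)/q_1}\|f\|_{q_1}$. Choosing $K$ large enough that $CK^{(5-q_1)/q_1} \leq 1/2$, which is possible because $q_1 > 5$, the Bourgain-Guth broad-narrow inequality closes the induction $D(R) \leq C_{K, q_1} + \tfrac12 D(R/2)$, so $D(R) \leq 2 C_{K, q_1}$. Sharpness follows from two standard examples: a Khintchine-randomized sum over $K$ disjoint $\xi_1$-slabs yields a lower bound $K^{5-q_1} \lesssim 1$, ruling out $q_1 \leq 5$; and the Knapp input $f = 1_{[0, R^{-1}] \times [0, 1]}$ yields a lower bound $R^{4 - q_1 + q_1/p_1} \lesssim 1$, saturating the line $1/p_1 + 4/q_1 = 1$. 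The main obstacle I anticipate is purely bookkeeping: the $y_2$-coordinate is not compressed under the narrow rescaling, so one must cover an ellipsoidal image of $B_R$ by $O(1)$ balls of radius $R/2$ rather than fitting it directly; once this is handled the argument parallels Sections \ref{200914section5}--\ref{200914section6} without requiring any new tools.
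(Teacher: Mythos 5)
Your proposal is correct and follows essentially the same route as the paper's proof: reduction to the diagonal $p_1=q_1>5$, a broad--narrow decomposition into $\xi_1$-slabs of width $K^{-1}$, the bilinear $L^4$ Plancherel estimate for the transverse part, and parabolic rescaling in $\xi_1$ plus induction on $R$ for the narrow part, with the threshold $q_1>5$ arising from the factor $K^{-1+5/q_1}$ exactly as in the paper. The explicit companion substitution in $x$ and the covering remark, as well as the Khintchine and Knapp examples for sharpness, are consistent with (and somewhat more detailed than) what the paper records.
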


Later we will use $L^p$ bounds of $E^{S_6} f$ for $p>4$. To obtain such estimates, we will interpolate the restriction estimate in Proposition  \ref{lowerdim} with $p_1=q_1>5$ and a trivial $L^2$-estimate and 
obtain
\begin{equation}\label{200901e7.7}
    \|E_{[0,1]^2}^{S_6}f\|_{L^p(B_R)} \lesssim_{p} R^{\frac16}\|f\|_{L^p(\R^2)}
\end{equation}
for every $p>4$.

Sharp restriction estimates for  two-dimensional  surfaces in $\R^4$ that satisfy certain non-degenerate curvature conditions were obtained by Christ \cite{MR766216}. The surface $S_6$ does not satisfy Christ's non-degeneracy condition, and therefore restriction estimates of the form \eqref{200831e7.6} hold true for a smaller range of $q_1$, which is $q_1>5$, than the range $q_1>4$ for non-degenerate surfaces. 

The proof of Proposition \ref{lowerdim} is rather straightforward. As we could not find a reference in the literature, we will provide a proof here.

\begin{proof}[Proof of Proposition \ref{lowerdim}]
To simplify notation, $E^{S_6}$ will be simplified to $E'$. By interpolation with the trivial $L^1\to L^{\infty}$, it suffices to prove 
\begin{equation}
    \|E' f\|_{L^{p_1}(B_R)}\le C_{p_1} \|f\|_{p_1},
\end{equation}
for every $p_1>5$ and ball $B_R\subset \R^4$ of radius $R$. We may assume that $B_R$ is centered at the origin. The proof is via an induction on $R$. Assume that we have proven  
\begin{equation}\label{200905e8.9}
    \|E' f\|_{L^{p_1}(B_{R'})}\le C_{p_1} \|f\|_{p_1},
\end{equation}
for every $R'\le R/2$. Let $K$ be a large positive dyadic number that is to be chosen. Write 
\begin{equation}
    E'_{[0, 1]^2} f=\sum_{I\subset [0, 1]; |I|=1/K} E'_{I\times [0, 1]} f. 
\end{equation}
Here the sum is over dyadic intervals. For two dyadic intervals $I_1, I_2$ of length $1/K$, we say that they are disjoint if their distance is at least $1/K$. Observe that we have the following simple pointwise estimate 
\begin{equation}
    |E'_{[0, 1]^2} f|\le 2 \sup_{|I|=1/K} |E'_{I\times [0, 1]} f|+ K^{10}\sup_{I_1, I_2 \, \text{disjoint}} |E'_{I_1\times [0, 1]}f E'_{I_2\times [0, 1]}f|^{1/2}.
\end{equation}
As a consequence, we obtain 
\begin{equation}
\begin{split}
    & \|E'_{[0, 1]^2} f\|_{L^{p_1}(B_R)}  \le C \Big(\sum_{|I|=1/K} \|E'_{I\times [0, 1]} f\|_{L^{p_1}(B_R)}^{p_1} \Big)^{1/p_1}\\
    & + K^{10}\sum_{I_1, I_2 \text{disjoint}}\Big( \Big\||E'_{I_1\times [0, 1]}f E'_{I_2\times [0, 1]}f|^{1/2}\Big\|_{L^{p_1}(B_R)}^{p_1}\Big)^{1/p_1}.
    \end{split}
\end{equation}
Regarding the bilinear term, we even have a good $L^4$ estimate. To be more precise, we have \begin{equation}\label{200905e8.14}
    \Big\||E'_{I_1\times [0, 1]}f E'_{I_2\times [0, 1]}f|^{1/2}\Big\|_{L^{4}(B_R)} \le C_K \|f\|_{L^4},
\end{equation}
via Plancherel's theorem. This was already observed by Bourgain and Demeter \cite{MR3447712}. So far we have obtained 
\begin{equation}
    \|E'_{[0, 1]^2} f\|_{L^{p_1}(B_R)}\le C \Big(\sum_{|I|=1/K} \|E'_{I\times [0, 1]} f\|_{L^{p_1}(B_R)}^{p_1} \Big)^{1/p_1}+ C_K K^{12}\|f\|_{p_1}.  
\end{equation}
Regarding the former term, we apply the induction hypothesis \eqref{200905e8.9}. Take the case $I=[0, 1/K]$ as an example. Denote $\widetilde{f}(\xi_1, \xi_2)=f(\xi_1/K, \xi_2)$.
We apply the change of variables 
\begin{equation}
    \xi_1\mapsto \xi_1/K,
     \ \  \xi_2\mapsto \xi_2,
\end{equation}
and see that 
\begin{equation}
    \|E'_{I\times [0, 1]}f\|_{L^{p_1}(B_R)} \le K^{-1} K^{4/p_1} \|E' \widetilde{f}\|_{L^{p_1}(B_R)}.
\end{equation}
By cutting $B_R$ into balls of radius $R/2$ and applying the induction hypothesis, we further obtain 
\begin{equation}
    \|E'_{I\times [0, 1]}f\|_{L^{p_1}(B_R)} \le 4 C_{p_1} K^{-1} K^{4/p_1} K^{1/p_1} \|f_{I\times [0, 1]}\|_{L^{p_1}}.
    \end{equation}
    We apply this bound to \eqref{200905e8.14} and obtain 
    \begin{equation}
        \|E'_{[0, 1]^2}f\|_{L^{p_1}(B_R)}\le 8 C_{p_1} K^{-1+5/p_1}\|f\|_{p_1}+ C_K K^{12}\|f\|_{p_1}. 
    \end{equation}
    In the end, we close the induction by first picking $K$ so large that $8K^{-1+5/p_1}=1/2$ and then setting $C_{p_1}=2C_K K^{12}$. 
    
    The sharpness of the exponents can be seen by using the same example as in Theorem \ref{thm_sharpness_degenerate}. We leave out the detailed constructions. 
\end{proof}

\medskip

In order to prove \eqref{d1bound}, we use an inductive argument on the radius $R$. We may assume that $R$ is a large number by taking $C_{p}$ to be sufficiently large. 
The restriction estimates for the parabola and the surface $S_6$ give the base of the inductive argument for the constant $D_2$.

\begin{lem}[Base of the induction]\label{basement} For every $R \geq 1$, it holds that
\begin{align}
    D_2(R^{1/2},1,R) &\lesssim_p R^{\frac2p-\frac12},
    \\
    D_2(1,R^{1/2},R) &\lesssim_{p} R^{-\frac13+\frac1p},
\end{align}
for every $p>4$. 
\end{lem}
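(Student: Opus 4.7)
The strategy is to rescale the ``narrow'' direction of width $R^{-1/2}$ to unit length, apply Hausdorff-Young in the conjugate physical variable, and then invoke a restriction estimate for an appropriate lower-dimensional surface. The key observation is that in the narrow direction the quadratic phase $\xi_i^2 x_j$ is $O(1)$ on $B_R$, so this direction contributes only through a Fourier transform controllable by Hausdorff-Young.

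For the second bound $D_2(1,R^{1/2},R)\lesssim R^{-1/3+1/p}$, I would set $J=[0,R^{-1/2}]$ (after translation) and substitute $\xi_3=R^{-1/2}\eta_3$. Since $R^{-1}\eta_3^2 x_4$ is bounded on $B_R$, one can write
\[
Ef(x) = R^{-1/2}\int_0^1 e\bigl(R^{-1/2}\eta_3 x_3 + R^{-1}\eta_3^2 x_4\bigr)\, E^{S_6}[\tilde f(\cdot,\cdot,\eta_3)](x_1,x_2,x_4,x_5)\, d\eta_3,
\]
where $\tilde f(\xi_1,\xi_2,\eta_3):=f(\xi_1,\xi_2,R^{-1/2}\eta_3)$. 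Interpreting this as a partial Fourier transform in $\eta_3$ at frequency $-R^{-1/2}x_3$, Hausdorff-Young in $x_3$ (for $p\ge 2$) combined with H\"older in $\eta_3\in[0,1]$ yields
\[
\int_{|x_3|\le R}|Ef|^p\,dx_3\le R^{-p/2+1/2}\int_0^1|E^{S_6}[\tilde f(\cdot,\cdot,\eta_3)](x_1,x_2,x_4,x_5)|^p\,d\eta_3.
\]
Integrating over $(x_1,x_2,x_4,x_5)\in B_R^{(4)}$, applying \eqref{200901e7.7} for $S_6$ (with its $R^{1/6}$ constant), and using the scaling identity $\|\tilde f\|_p^p=R^{1/2}\|f\|_p^p$ collects to $\|Ef\|_{L^p(B_R)}^p\lesssim R^{1-p/3}\|f\|_p^p$, which is the claim.

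For the first bound $D_2(R^{1/2},1,R)\lesssim R^{2/p-1/2}$, I would take $I=[0,R^{-1/2}]$ and treat the dual variables $(x_1,x_5)$ as conjugate to the pair $(\xi_1,\xi_1^2)$, writing $Ef(x)=\int_I e(\xi_1 x_1+\xi_1^2 x_5)H(\xi_1,x_2,x_3,x_4)\,d\xi_1$ so that $Ef$ appears as the extension on the parabola $(\xi_1,\xi_1^2)\subset\mathbb{R}^2$ with amplitude $H(\xi_1,\cdot)=\int_{[0,1]^2}f(\xi)e(\xi_2(x_2+\xi_1 x_4)+\xi_3 x_3+\xi_3^2 x_4)\,d\xi_2 d\xi_3$ supported on $I$. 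Parabolic rescaling $\xi_1=R^{-1/2}\eta_1$ combined with the sharp $L^p\to L^p$ parabola restriction (valid for $p\ge 6$) yields $\int_{|x_1|,|x_5|\le R}|Ef|^p\,dx_1 dx_5\lesssim R^{-p/2+2}\int_I|H|^p\,d\xi_1$. For each fixed $\xi_1$, the shear $(x_2,x_4)\mapsto(x_2+\xi_1 x_4,x_4)$ converts $|H|$ into the extension of $f(\xi_1,\cdot,\cdot)$ to the parabolic cylinder $(\xi_2,\xi_3,\xi_3^2)\subset\mathbb{R}^3$, whose sharp $L^p\to L^p$ restriction on $B_R^{(3)}$ (a Fubini consequence of the parabola restriction) closes the computation for $p\ge 6$. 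The remaining range $p\in(4,6)$ is reached by combining the $p=6$ bound with a bilinear $L^4$ endpoint: expanding $\|E_{I_1}f\cdot E_{I_2}f\|_{L^2(\mathbb{R}^5)}^2$ by Plancherel for transverse caps $I_1,I_2\subset I$ reduces to a Jacobian computation for the change of variables $(\xi,\xi')\mapsto\eta=(\Phi(\xi)+\Phi(\xi'),\xi_3)$, whose determinant equals $2(\xi_1-\xi_1')^2$, from which the bilinear $L^4$ endpoint is extracted and interpolated with the $p=6$ bound.

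The main obstacle is the extension of the first bound to $p\in(4,6)$. The scaling-plus-Hausdorff-Young argument is clean for $p\ge 6$, but in the range $(4,6)$ both the parabola in $\mathbb{R}^2$ and the parabolic cylinder in $\mathbb{R}^3$ lose sharpness, so one must rely on the bilinear $L^4$ endpoint; the bilinear's Jacobian $(\xi_1-\xi_1')^2$ is singular as $\xi_1\to\xi_1'$, and handling this singularity requires a dyadic decomposition in the transverse scale followed by a careful bookkeeping to extract the constant $R^0$ at $p=4$ needed to close the interpolation to the claimed $R^{2/p-1/2}$.
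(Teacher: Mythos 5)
Your treatment of the second bound is essentially the paper's: freeze $\xi_3$ at scale $R^{-1/2}$ so that $\xi_3^2x_4=O(1)$ on $B_R$, apply Hausdorff--Young in $x_3$ together with H\"older on the unit interval, and invoke the $S_6$ estimate \eqref{200901e7.7}; the numerology $R^{1/6}\cdot R^{-\frac12(\frac1{p'}-\frac1p)}=R^{-\frac13+\frac1p}$ checks out. For the first bound your route differs from the paper's. The paper performs a flat $L^2$-orthogonality decoupling of $I$ into intervals of length $R^{-1}$ (cost $R^{\frac12(1-\frac2p)}$), and on each such piece treats the surface as the totally degenerate $(\xi_1,\xi_2,\xi_3,\xi_3^2,0)$, using Hausdorff--Young in $(\xi_1,\xi_2)$, the parabola $L^p\to L^p$ estimate in $\xi_3$, a trivial $R^{1/p}$ from the $x_5$-integration, and H\"older on the thin $\xi_1$-interval. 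You instead keep the $\xi_1^2$ curvature, view $Ef$ as a parabola extension in $(x_1,x_5)$ with a sheared cylinder amplitude, and parabolically rescale. Your factor $R^{-p/2+2}$ and the cylinder step are correct, so your linear argument does reproduce $R^{\frac2p-\frac12}$ — but only on the range where you allow yourself to use the parabola bound.

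That is where the genuine gap sits. The sharp $L^p\to L^p$ extension estimate for the parabola in $\R^2$ holds for all $p>4$, not only $p\ge6$ (Zygmund/Fefferman: $\|Eg\|_{L^q(\R^2)}\lesssim\|g\|_{L^p}$ for $q>4$ and $\frac1p+\frac3q\le1$, which at $p=q$ reads $p\ge4$). Your restriction to $p\ge6$ is therefore unnecessary, and with the correct range your linear argument already covers all $p>4$, which is all the lemma claims. As written, however, you fill the range $p\in(4,6)$ by interpolating with a putative bilinear $L^4$ endpoint giving the constant $R^0$ at $p=4$, and this patch does not work: the linear $L^4(B_R)$ bound with a uniform constant is exactly the endpoint the lemma avoids (at $p=4$ even the parabola extension fails in $L^4$, and your Whitney summation over the singular Jacobian $2(\xi_1-\xi_1')^2$ will produce at least a logarithmic loss). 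Moreover, an $R^\epsilon$ loss at $p=4$ is fatal for the interpolation: near $p=4$ the target exponent $\frac2p-\frac12$ is a tiny negative power of $R$, which an endpoint loss of $R^\epsilon$ would overwhelm. So either drop the bilinear detour and quote the parabola estimate on its true range $p>4$, or follow the paper's flat-decoupling route; both close the argument, while the interpolation scheme you describe does not.
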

Note that both of the exponent of $R$ on the right hand side are smaller than zero for $p>4$. This will make it possible to close the induction.

\begin{proof}[Proof of Lemma \ref{basement}]
Let us start with the first inequality. By the definition of $D_2$, we need to prove 
\begin{equation}\label{200901e7.10}
    \|E_{I \times [0,1]^2 }f\|_{L^p(B_R)} \lesssim_p R^{\frac{2}{p}-\frac12} \|f\|_{L^p(\R^3)},
\end{equation}
for every interval $I$ of length $R^{-1/2}$ and ball $B_R\subset \R^5$ of radius $R$.  We may assume that $I=[0,R^{-1/2}]$ and $B_R=[0,R]^5$. By $L^2$ orthogonality and interpolation (see for instance Appendix B of \cite{arxiv:1811.02207}), we can decompose the interval $I$ into smaller intervals with side length $R^{-1}$:
\begin{equation}
    \|E_{I \times [0,1]^2 }f\|_{L^p(B_R)} \lesssim_p 
    R^{\frac12(1-\frac{2}{p})}
    \Big(\sum_{|I'|=R^{-1}; I'\subset I}
    \|E_{I' \times [0,1]^2 }f\|_{L^p(w_{B_R})}^p \Big)^{1/p}.
\end{equation}
Here, $w_{B_R}(x):=(1+|x|/R)^{-500}$ is a weight function. All the terms in the above sum can be handled in the same way. We take $I'=[0, R^{-1}]$ as an example. Recall that the surface under consideration is $(\xi_1, \xi_2, \xi_3, \xi_1 \xi_2+\xi_3^2, \xi_1^2)$. The choice of the interval $I'$ guarantees that $|\xi_1|\le R^{-1}$. By the uncertainty principle, the surface can be thought of as $\{(\xi_1,\xi_2,\xi_3,\xi_3^2,0) \}$. Therefore, by the Hausdorff-Young inequality and well-known $L^p \rightarrow L^p$ restriction estimates for the parabola, we obtain 
\begin{equation}\label{09012e8.23}
    \|E_{I'\times [0, 1]^2} f\|_{L^p(w_{B_R})} \lesim_{p} R^{1/p}\Big\|\|f_{I'}\|_{L^{p'}_{\xi_1, \xi_2}}\Big\|_{L^p_{\xi_3}}\lesim_p R^{\frac 1 p}R^{-(\frac{1}{p'}-\frac 1 p)}\|f_{I'}\|_p,
\end{equation}
where $f_{I'}:=f\cdot 1_{I'\times [0, 1]^2}$ and in the last step we applied H\"older's inequality. We sum over $I'\subset I$ on the right hand side of \eqref{09012e8.23} and further obtain 
\begin{equation}
    \|E_{I\times [0, 1]^2} f\|_{L^p(B_R)}\lesim_p R^{\frac12(1-\frac{2}{p})}
    R^{\frac{1}{p}}
    (R^{-1})^{\frac{1}{p'}-\frac{1}{p}}\|f\|_{L^p([0,1]^3)}.
\end{equation}
This finishes the proof of the first inequality.

\medskip

Let us move on to the second inequality. By the definition of $D_2$, we need to prove 
\begin{equation}\label{200901e7.10zz}
    \|E_{[0,1]^2\times J }f\|_{L^p(B_R)} \lesssim_{p} R^{-\frac13+\frac1p}\|f\|_{L^p(\R^3)},
\end{equation}
for every interval $J$ of length $R^{-1/2}$.  We may assume that $J=[0,R^{-1/2}]$.  By the uncertainty principle, the surface $S$ can be thought of as
$\{(\xi_1,\xi_2,\xi_3,\xi_1^2,\xi_1\xi_2) \}.$
By applying the restriction estimate \eqref{200901e7.7} for such a surface and the Hausdorff-Young inequality, we obtain
\begin{equation}
\begin{split}
    \|E_{[0,1]^2 \times J}f\|_{L^p(B_R)}
    &\lesssim_{p} R^{\frac16}\|f\|_{L^p_{\xi_1,\xi_2}([0,1]^2)L^{p'}_{\xi_3}([0,R^{-1/2}]) }
    \\& \lesssim_{p}
    R^{\frac16}
    R^{-\frac12(\frac{1}{p'}-\frac1p)}\|f\|_{L^p([0,1]^3)},
\end{split}
\end{equation}
where in the last step we applied H\"older's inequality. This completes the proof of the second inequality, thus the proof of the lemma. 
\end{proof}

Another ingredient in the proof of \eqref{d1bound} is a ``refinement'' of a bilinear restriction estimate. We need to obtain a bilinear restriction estimate with favourable  dependence on the supports of input functions. We say that two rectangular boxes $Q_i=[a_i,a_i+l_1] \times [0,1] \times [b_i,b_i+l_2]$ with $i=1, 2$ are \emph{separated} provided that
\begin{equation}
\begin{split}
    |a_1-&a_2| \geq 5 l_1 \;\; \text{and}\;\; |b_1-b_2| \geq 5 l_2.
\end{split}
\end{equation}

\begin{lem}[A bilinear restriction estimate]\label{200901lem7.4}
Let $Q_1$ and $Q_2$ be two rectangular boxes that are given as above and are separated. Then for $p \geq 4$, we obtain
\begin{equation}\label{200905e8.28}
    \||E_{Q_1}f_1E_{Q_2}f_2|^{\frac12}\|_{L^p(\R^5)} \lesssim_p
    (l_1l_2)^{1-\frac{4}{p}}\|f_1\|_{L^p(Q_1)}^{1/2}\|f_2\|_{L^p(Q_2)}^{1/2}.
\end{equation}
\end{lem}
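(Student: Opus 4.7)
The plan is to prove the claim by first establishing the $p=4$ endpoint via a bilinear $L^2$ bound on the product $E_{Q_1}f_1\cdot E_{Q_2}f_2$, and then to reach the full range $p>4$ by bilinear Riesz--Thorin interpolation with the trivial $L^1\to L^\infty$ estimate. The $p=4$ case is the main computation.

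For the $p=4$ case, I would expand $\|E_{Q_1}f_1\cdot E_{Q_2}f_2\|_{L^2(\R^5)}^2$ via Plancherel as a twelvefold integral over $(\xi,\xi',\eta,\eta')\in Q_1\times Q_1\times Q_2\times Q_2$ with a five-dimensional delta constraint $\Phi(\xi,\eta)=\Phi(\xi',\eta')$, where $\Phi(\xi,\eta):=(\xi+\eta,\;\xi_1\xi_2+\xi_3^2+\eta_1\eta_2+\eta_3^2,\;\xi_1^2+\eta_1^2)$. The key algebraic observation is that the subsystem $\xi_1+\eta_1=\xi_1'+\eta_1'$ together with $\xi_1^2+\eta_1^2=\xi_1'^2+\eta_1'^2$ forces $\{\xi_1,\eta_1\}=\{\xi_1',\eta_1'\}$ as unordered pairs; the separation $|a_1-a_2|\ge 5l_1$ rules out the swap and pins down $\xi_1'=\xi_1$, $\eta_1'=\eta_1$. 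The three remaining constraints then define a smooth one-parameter family, naturally parametrized by $t:=\xi_3-\xi_3'=\eta_3'-\eta_3$, with $\xi_2'=\xi_2-s(t)$, $\eta_2'=\eta_2+s(t)$, and $s(t)=(2t(\eta_3-\xi_3)+2t^2)/(\xi_1-\eta_1)$.

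I would then evaluate the $5\times 5$ Jacobian of the constraint system (treating $\eta_3'$ as the free parameter); a short determinant calculation yields $|J|=2(\xi_1-\eta_1)^2\simeq A^2$, where $A:=|a_1-a_2|\gtrsim l_1$. The effective range of $t$ is controlled on one hand by the box constraints $\xi_3'\in J_1$ and $\eta_3'\in J_2$, which give $|t|\lesssim l_2$, and on the other hand by $\xi_2',\eta_2'\in[0,1]$, which forces $|s(t)|\le 1$ and hence, since $|\eta_3-\xi_3|\simeq B:=|b_1-b_2|\gtrsim l_2$ dominates $|t|\le l_2$, gives $|t|\lesssim A/B$. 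Combining, the $t$-range has length at most $\min(l_2,A/B)\le A/B$. Applying Cauchy--Schwarz in $(\xi,\eta,t)$ on the resulting seven-dimensional manifold, and using that for each fixed $t$ the map $(\xi,\eta)\mapsto(\xi',\eta')$ has Jacobian identically $1$ and lands inside $Q_1\times Q_2$, both Cauchy--Schwarz factors reduce to $(\text{$t$-range})\cdot\|f_1\|_{L^2(Q_1)}^2\|f_2\|_{L^2(Q_2)}^2$, producing
\[
\|E_{Q_1}f_1\cdot E_{Q_2}f_2\|_{L^2(\R^5)}^2 \lesssim \frac{\min(l_2,A/B)}{A^2}\|f_1\|_{L^2}^2\|f_2\|_{L^2}^2 \le \frac{1}{AB}\|f_1\|_{L^2}^2\|f_2\|_{L^2}^2 \lesssim \frac{1}{l_1 l_2}\|f_1\|_{L^2}^2\|f_2\|_{L^2}^2.
\]
H\"older's inequality $\|f_j\|_{L^2(Q_j)}\le(l_1 l_2)^{1/4}\|f_j\|_{L^4(Q_j)}$ then converts this into the $p=4$ endpoint of the lemma with constant $(l_1 l_2)^{0}=1$.

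For the remaining $p>4$, I would use the trivial bound $\|E_{Q_j}f_j\|_{L^\infty}\le\|f_j\|_{L^1}\le(l_1 l_2)\|f_j\|_{L^\infty}$ to obtain the $p=\infty$ endpoint with constant $l_1 l_2$. Bilinear Riesz--Thorin interpolation, applied to the bilinear operator $(f_1,f_2)\mapsto E_{Q_1}f_1\cdot E_{Q_2}f_2\in L^{p/2}$ between these two endpoints, then delivers the whole range $p\ge 4$ with constant $(l_1 l_2)^{2(1-4/p)}$; taking square roots matches the lemma. The main obstacle will be the algebraic bookkeeping in the $p=4$ step, particularly checking that both separation hypotheses enter essentially: the separation in $\xi_1$ eliminates the swap $\{\xi_1,\eta_1\}\leftrightarrow\{\eta_1',\xi_1'\}$ and gives the Jacobian of size $A^2$, while the separation in $\xi_3$ enters the more subtle bound $|s(t)|\le 1\Longrightarrow|t|\lesssim A/B$; without this second use of transversality one is left only with the weaker factor $l_2/A^2$, which is insufficient whenever $l_2^2>l_1$.
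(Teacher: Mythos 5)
Your argument is correct and follows the same overall strategy as the paper: establish the $p=4$ endpoint by an exact $L^2$ computation on the product $E_{Q_1}f_1\,E_{Q_2}f_2$, convert $L^2$ to $L^4$ norms of the $f_j$ by H\"older on the boxes, and reach $p>4$ by bilinear interpolation with the trivial $L^1\to L^\infty$ bound. The only substantive difference is in how the $p=4$ computation is organized. The paper performs a single change of variables $(\xi,\xi')\mapsto(\xi+\xi',\,P(\xi)+P(\xi'),\,Q(\xi)+Q(\xi'),\,\xi_2)$, choosing the \emph{long} direction $\xi_2$ as the free sixth coordinate; the resulting Jacobian $4|\xi_1-\xi_1'||\xi_3-\xi_3'|\gtrsim l_1l_2$ encodes both separations at once, and the estimate drops out immediately after Cauchy--Schwarz in $\xi_2$ and Plancherel. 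You instead resolve the convolution constraint fiberwise with $t=\xi_3-\xi_3'$ as the free parameter; your Jacobian $2(\xi_1-\eta_1)^2\simeq A^2$ (which I checked) sees only the first separation, so you must recover the second from the geometry of the fiber via $|s(t)|\le 1\Rightarrow|t|\lesssim A/B$, yielding $\min(l_2,A/B)/A^2\le 1/(AB)\lesssim 1/(l_1l_2)$ — the same bound. Both routes are valid; your closing remark correctly identifies that the fiber-length step is where the $\xi_3$-separation enters and is genuinely needed (the crude bound $l_2/A^2$ fails exactly in the regime $l_1=l_2^2$ used later in the induction), whereas the paper's choice of free coordinate makes that step unnecessary. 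The interpolation bookkeeping $(l_1l_2)^{2(1-4/p)}$ for the product, hence $(l_1l_2)^{1-4/p}$ after the square root, also matches.
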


\begin{proof}[Proof of Lemma \ref{200901lem7.4}]
We will prove \eqref{200905e8.28} at $p=4$ and $p=\infty$ separately. The desired estimate at $p=\infty$ is trivial: 
\begin{equation}
    \||E_{Q_1}f_1E_{Q_2}f_2|^{\frac12}\|_{L^{\infty}(\R^5)} \le \|E_{Q_1} f_1\|_{\infty}^{1/2} \|E_{Q_2} f_2\|_{\infty}^{1/2}\le \|f_1\|_1^{1/2} \|f_2\|_1^{1/2}.  
\end{equation}
We apply H\"older's inequality to the right hand side and obtain the desired bound. 

It remains to prove \eqref{200905e8.28} at $p=4$. We write the fourth power of the left hand side of \eqref{200905e8.28} with $p=4$ as 
\begin{equation}\label{200905e8.30}
    \int \norm{\iint_{Q_1\times Q_2} f_1(\xi)f_2(\xi') e(\star)d\xi d\xi'}^2
\end{equation}
where 
\begin{equation}
    \star=x'(\xi+\xi')+x_4(P(\xi)+P(\xi'))+x_5 (Q(\xi)+Q(\xi')),
\end{equation}
with $x'=(x_1, x_2, x_3)$, $\xi=(\xi_1, \xi_2, \xi_3)$, $\xi'=(\xi'_1, \xi'_2, \xi'_3)$, $P(\xi)=\xi_1 \xi_2+\xi_3^2$ and $Q(\xi)=\xi_1^2$. Apply the change of variables 
\begin{equation}
    \begin{split}
        & \xi+\xi'=\eta; \ \ \xi_2=\eta_6;\\
        & P(\xi)+P(\xi')=\eta_4; \ Q(\xi)+Q(\xi')=\eta_5,
    \end{split}
\end{equation}
with $\eta=(\eta_1, \eta_2, \eta_3)$. The Jacobian $J$ of the change of variable is $4|\xi_1-\xi'_1||\xi_3-\xi'_3|$. We obtain that \eqref{200905e8.30} equals 
\begin{equation}
    \int_{\R^5} \norm{\iint \widetilde{f_1} \widetilde{f_2} J^{-1}\cdot e(x\cdot \eta')d \eta' d\eta_6}^2 dx,
\end{equation}
for appropriate $\widetilde{f_1}$ and $\widetilde{f_2}$. We apply H\"older's inequality to the integration in $d \eta_6$ and Plancherel's theorem to the variables $\eta'$, and obtain 
\begin{equation}
    \iint |\widetilde{f_1}|^2 |\widetilde{f_2}|^2 |J|^{-2} \le \iint_{Q_1\times Q_2} |f_1|^2 |f_2|^2 J^{-1},
\end{equation}
where in the last step we changed variables back. So far we have proven that 
\begin{equation}
    \||E_{Q_1}f_1E_{Q_2}f_2|^{\frac12}\|_{L^4(\R^5)}\lesim l_1^{-1/4} l_2^{-1/4} \|f_1\|_2^{1/2}\|f_2\|_2^{1/2}.  
\end{equation}
In the end, we apply H\"older's inequality $L^2\to L^4$ to $f_1$ and $f_2$ and obtain the desired bound. 
\end{proof}

\subsection{Proof of \eqref{d1bound}}

Fix $p>4$. Let $\epsilon=\epsilon_p>0$ be a sufficiently small constant that is to be determined. It will tend to 0 as $p$ tends to 4. Let $K>0$ be a large constant depending on $\epsilon$ that is to be determined. Recall that we need to prove \eqref{d1bound}. The proof is via an induction on $R$. Let us assume that we have proven 
\begin{equation}\label{200903e7.18}
    D_1(R'_0)\le C_p, \text{ for every } R'_0\le R/2.
\end{equation}
To proceed, we need to make use of the constant $D_2$.

\begin{lem}\label{d2d3bound}
There exists $\epsilon=\epsilon_p>0$ such that 
\begin{equation}\label{d2bound}
    D_2(M,1,R') \leq C_{p}M^{-\epsilon}
\end{equation}
for $K \leq M \leq (R')^{1/2}$ and $K^2 \leq R' \leq R$. Similarly, it holds that
\begin{equation}\label{d3bound}
    D_2(1,N,R') \leq C_{p}N^{-\epsilon}
\end{equation}
for $K^{1/2} \leq N \leq (R')^{1/2}$ and $K \leq R' \leq R$. 
\end{lem}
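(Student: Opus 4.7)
My plan is to prove both estimates by a joint induction: for the first inequality, downward on $M$ starting from the base case $M=(R')^{1/2}$; for the second, downward on $N$ starting from $N=(R')^{1/2}$, with both run under the outer induction hypothesis \eqref{200903e7.18}. The base cases follow at once from Lemma \ref{basement}: we have $D_2((R')^{1/2},1,R')\lesssim (R')^{2/p-1/2} = M^{4/p-1}$ and $D_2(1,(R')^{1/2},R')\lesssim N^{2/p-2/3}$, and both exponents are strictly negative for $p>4$, so any $\epsilon_p<\min(1-4/p,\ 2/3-2/p)$ closes the base.

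For the inductive step on the first estimate at $(M,1,R')$ with $K\le M<(R')^{1/2}$, I would run a Bourgain--Guth broad-narrow analysis simultaneously in the $\xi_1$ and $\xi_3$ directions: decompose $I\times[0,1]\times[0,1]$ (with $|I|=1/M$) into $K_0^2$ sub-boxes $Q_{(i,j)} = I'_i\times[0,1]\times J'_j$ of dimensions $(MK_0)^{-1}\times 1\times K_0^{-1}$, for a parameter $K_0$ to be tuned. At each $x\in B_{R'}$, either two sub-boxes $Q_1,Q_2$ separated in both $\xi_1$ and $\xi_3$ each contribute significantly (broad), or the significant contributions concentrate in a single column (narrow in $\xi_1$) or a single row (narrow in $\xi_3$). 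The broad contribution is controlled by Lemma \ref{200901lem7.4}, giving a factor $(MK_0^2)^{4/p-1}$; since $4/p-1<0$ for $p>4$ this absorbs the $O(K_0^4)$ separated pairs without difficulty. The narrow-in-$\xi_1$ case reduces to $K_0\cdot D_2(MK_0,1,R')$, and by the inductive hypothesis at the larger parameter $MK_0$ this is $\le C_p K_0^{1-\epsilon_p} M^{-\epsilon_p}\|f\|_p$. The narrow-in-$\xi_3$ case is bounded using the trivial monotonicity $D_2(M,K_0,R')\le D_2(1,K_0,R')$ combined with the second inductive hypothesis at parameter $K_0$. The proof of the second estimate is parallel with the roles of $\xi_1$ and $\xi_3$ swapped.

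The main obstacle will be the narrow-in-$\xi_3$ step, where the naive triangle inequality costs $K_0^{1-1/p}$ and swamps the $K_0^{-\epsilon_p}$ gain from the second inductive hypothesis. To close the induction I would replace the triangle inequality with $\ell^p$-decoupling for the parabola $(\xi_3,\xi_3^2)$ that is embedded in our surface, cutting the loss in that direction to $K_0^{1/4-1/(2p)+\epsilon'}$; an analogous decoupling in the $(\xi_1,\xi_1^2)$ variables handles the narrow-in-$\xi_1$ case when proving the second inequality. Then $\epsilon_p$ can be taken strictly below $1/4-1/(2p)$, and it is forced to tend to $0$ as $p\to 4^+$, consistent with the statement of the lemma. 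Balancing $K_0$ as a small power of $M$ and verifying that the various $R'$-dependences remain uniform (using the outer hypothesis $D_1(R'_0)\le C_p$ for $R'_0\le R/2$ wherever a scaling argument reduces to $D_1$ at some smaller radius) is the bookkeeping that carries the induction across.
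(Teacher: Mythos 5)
Your overall skeleton --- base case from Lemma \ref{basement}, backward induction on the first parameter, a two-directional broad--narrow decomposition with the broad part controlled by Lemma \ref{200901lem7.4} --- matches the paper's proof. The decisive step, however, is the narrow-in-$\xi_3$ term, and there your argument does not close. Bounding $D_2(M,K_0,R')$ by $D_2(1,K_0,R')$ discards the $\xi_1$-localization to an interval of length $M^{-1}$, so the resulting estimate carries no factor of $M^{-\epsilon_p}$ at all: combining the second inductive hypothesis \eqref{d3bound} with any decoupling loss in $\xi_3$ yields at best $C_p K_0^{\sigma-\epsilon_p}\|f\|_p$ for some $\sigma>0$ (note that $\ell^p$ parabola decoupling into $K_0$ intervals of length $K_0^{-1}$ costs $K_0^{1/2-1/p+\epsilon'}$, not $K_0^{1/4-1/(2p)}$, so the net power of $K_0$ is positive near $p=4$). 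Since the target is $\tfrac12 C_p M^{-\epsilon_p}\|f\|_p$ and $K_0$ is only a small power of $M$, this term alone prevents the induction on $M$ from closing. A related defect appears in your narrow-in-$\xi_1$ accounting: the stray factor $K_0$ in ``$K_0\cdot D_2(MK_0,1,R')$'' gives $C_pK_0^{1-\epsilon_p}M^{-\epsilon_p}$, again exceeding the inductive target; the correct $\ell^p$ bookkeeping (at each point only $O(1)$ caps are significant, so the narrow contribution is already an $\ell^p$ sum with no cardinality loss) removes that factor, after which the hypothesis at the larger parameter $MK_0$ supplies the needed gain $K_0^{-\epsilon_p}$.

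What is missing is the structural fact the paper exploits for the narrow-in-$\xi_3$ term: the surface $(\xi_1,\xi_2,\xi_3,\xi_1\xi_2+\xi_3^2,\xi_1^2)$ is preserved by the anisotropic rescaling \eqref{anisotropic}, $(\xi_1,\xi_2,\xi_3)\mapsto(4\xi_1/K,\xi_2,2\xi_3/K^{1/2})$. Applied to a slab $[0,M^{-1}]\times[0,1]\times[0,K^{-1/2}]$, it returns a box of the original type with first parameter $4M/K$ and radius $R'/2$, i.e.\ it reduces to $D_2(4M/K,1,R'/2)$ --- invoking the induction on $R'$, not the second estimate --- while the Jacobian factors contribute $K^{-3/2+6/p}<1$ for $p>4$. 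This keeps the $M^{-\epsilon}$ decay (since $(4M/K)^{-\epsilon}\le K^{\epsilon}M^{-\epsilon}$) and supplies the smallness in $K$ that closes the induction; when $4M/K\lesssim K$ one instead falls back on the outer hypothesis \eqref{200903e7.18} for $D_1$. No decoupling is needed, and decoupling cannot substitute for the rescaling because it cannot recover the discarded $\xi_1$-localization. A smaller point: the base of the backward induction must cover the whole band $M\in[K^{-100}(R')^{1/2},(R')^{1/2}]$ (via Lemma \ref{basement} and the triangle inequality), not just the endpoint $M=(R')^{1/2}$, since each inductive step moves $M$ by a multiplicative factor of order $K$.
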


As mentioned above, here $K$ is a large constant depending on $\epsilon$ that is to be determined.

\begin{proof}[Proof of \eqref{d1bound} by assuming Lemma \ref{d2d3bound}]
First of all, we apply a simple version of the broad-narrow analysis of Bourgain and Guth \cite{MR2860188}. In the current situation, as we only need to bi-linearize our Fourier extension operator (instead of multi-linearize), we can directly observe the pointwise estimate 
\begin{equation}\label{200902e7.21}
\begin{split}
    |E_{[0,1]^3}f(x)| &\leq 10 \big(\sum_{a \in 2K^{-1}\Z } |E_{[a,a+2K^{-1}]\times[0,1]^2 }f(x)|^p \big)^{1/p}
        \\&
        +
        10 \big(\sum_{b \in 2K^{-1/2}\Z } |E_{[0,1]^2 \times [b,b+2K^{-1/2}] }f(x)|^p \big)^{1/p}
        \\&
        +
        2K^{10}\sup_{Q_1,Q_2 \text{ separated }}|E_{Q_1}f(x)E_{Q_2}f(x)|^{1/2},
\end{split}
\end{equation}
where $Q_i$ is a rectangular box of the form $[a_i, a_i+ K^{-1}]\times [0, 1]\times [b_i, b_i+K^{-1/2}]$ with $i=1, 2$. 
We raise both sides to the $p$-th power, integrate over $B_R$ and obtain
\begin{equation}\label{200902e7.22}
    \begin{split}
        \|E_{ [0,1]^3}f\|_{L^p(B_R)}
        &\lesssim 
        \Big(
        \sum_{a \in 2K^{-1}\Z }
        \| E_{[a,a+2K^{-1}]\times[0,1]^2 }f \|_{L^p(B_R)}^p\Big)^{1/p}
        \\&
        +
        \Big(
        \sum_{b \in 2K^{-1/2}\Z }
        \| E_{[0,1]^2 \times [b,b+2K^{-1/2}] }f \|_{L^p(B_R)}^p\Big)^{1/p}
        \\&
        +K^{100}
        \sup_{Q_1,Q_2\text{ separated }}
        \|E_{Q_1}fE_{Q_2}f|^{1/2}\|_{L^p(B_R)}.
    \end{split}
\end{equation}
By the definition of $D_2$, Lemma \ref{d2d3bound} and the bilinear restriction estimate in Lemma \ref{200901lem7.4}, the above terms are bounded by
\begin{equation}
 C\Big( C_{p}K^{-\epsilon}+C_{p}(K^{1/2})^{-\epsilon}+   K^{100} \Big) \|f\|_{p},
\end{equation}
for some universal constant $C$. To close the induction, we first pick $K$ (depending on $\epsilon$) such that $C K^{-\epsilon/2}=1/4$, and then pick $C_p=2C K^{100K/\epsilon^4}$. Here we pick an extremely large constant $C_p$ for later use. The choice of it can always be adjusted for different purposes. This finishes the proof of \eqref{d1bound}. 
\end{proof}

\begin{proof}[Proof of Lemma \ref{d2d3bound}]

The proofs for these two estimates are similar, and we will only present the proof of  \eqref{d2bound}. As before, we may assume that $R'$ is a large number compared to $K$. We will prove \eqref{d2bound} via an induction on $R'$ and therefore assume that we have proven 
\begin{equation}\label{200902e7.24}
    D_2(M, 1, R'')\le C_p M^{-\epsilon},
\end{equation}
for $K\le M\le (R'')^{1/2}$ and $R''\le R'/2$. Our goal is to prove 
\begin{equation}\label{d2bound_zzz}
    D_2(M, 1, R')\le C_p M^{-\epsilon}, \text{ for } K\le M\le (R')^{1/2}.
\end{equation}
To prove such an estimate, we will apply a further (backward) induction on $M$. 
 By Lemma \ref{basement} and the triangle inequality, the desired estimate \eqref{d2bound_zzz} holds true for $M \in [K^{-100}(R')^{1/2},(R')^{1/2}]$. Hence, we may assume that $M \leq K^{-100}(R')^{1/2}$ and assume that we have proven 
 \begin{equation}\label{d2bound_zzz_dd}
    D_2(M', 1, R')\le C_p (M')^{-\epsilon}, \text{ for } M'\ge 2M.
\end{equation}
To prove \eqref{d2bound_zzz}, by the definition of $D_2$, we need to prove
\begin{equation}\label{200902e7.27}
        \|E_{I \times [0,1]^2 }f\|_{L^p(B_{R'})} \leq C_{p}M^{-\epsilon}
        \|f\|_{L^p(\R^3)},
\end{equation}
for every interval  $I$ of length $M^{-1}$. We may assume that $I=[0,M^{-1}]$. 

\medskip

To prove \eqref{200902e7.27}, we first write 
\begin{equation}
    E_{I\times [0, 1]^2} f=\sum_{a\in 2(KM)^{-1}\Z}\sum_{b\in 2K^{-1/2}\Z} E_{[a, a+2(KM)^{-1}]\times [0, 1]\times [b, b+2K^{-1/2}]}f.
\end{equation}
By the same argument as how we derived \eqref{200902e7.21} and \eqref{200902e7.22}, we also have 
\begin{equation}
    \begin{split}
        \|E_{I \times [0,1]^2}f\|_{L^p(B_{R'})}
        &\lesssim 
        \Big(
        \sum_{a \in 2(KM)^{-1}\Z }
        \| E_{[a,a+2(KM)^{-1}]\times[0,1]^2 }f \|_{L^p(B_{R'})}^p\Big)^{1/p}
        \\&
        +
        \Big(
        \sum_{b \in 2K^{-1/2}\Z }
        \| E_{[0,M]\times[0,1] \times [b,b+2K^{-1/2}] }f \|_{L^p(B_R)}^p\Big)^{1/p}
        \\&
        +K^{100}
        \sup_{Q_1,Q_2\text{ separated }}
        \||E_{Q_1}fE_{Q_2}f|^{1/2}\|_{L^p(B_{R'})},
    \end{split}
\end{equation}
where $Q_i$ is a rectangular box of the form $[a_i, a_i+2(KM)^{-1}]\times [0, 1]\times [b_i, b_i+2K^{-1/2}]$ and the implicit constant is absolute. We apply the definition of $D_2$ to the first term, apply the bilinear restriction estimate in Lemma \ref{200901lem7.4} to the last term, and obtain
\begin{equation}\label{beforerescaling}
    \begin{split}
        \|E_{I \times [0,1]^2}f\|_{L^p(B_{R'})}
        &\lesssim 
        \big(
        D_2(KM/2,1,R')+K^{200}M^{-\epsilon} \big)
        \|f\|_{L^p}
        \\&+
        \Big(
        \sum_{b \in 2K^{-1/2}\Z }
        \| E_{[0,M^{-1}]\times[0,1] \times [b,b+2K^{-1/2}] }f \|_{L^p(B_{R'})}^p\Big)^{1/p}.
    \end{split}
\end{equation}
By the induction hypothesis \eqref{d2bound_zzz_dd} on $D_2$, the first term on the right hand side will be harmless. The second term will be absorbed by $C_p M^{-\epsilon}$, the desired bound in \eqref{200902e7.27}, as $C_p$ is sufficiently large.  Let us further process the last term. All the terms in the sum there can be handled similarly. For simplicity, we only look at the term $b=0$. To handle this term, we make use of the anisotropic rescaling
\begin{equation}\label{anisotropic}
    (\xi_1,\xi_2,\xi_3) \mapsto (4\xi_1/K,\xi_2,2\xi_3/K^{1/2}),
\end{equation}
and obtain 
\begin{equation}\label{200902e7.32}
\begin{split}
    & \| E_{[0,M^{-1}]\times[0,1] \times [0,K^{-1/2}] }f \|_{L^p(B_{R'})}\\
    & \lesssim
    K^{\frac12(-3+\frac{9}{p})}
    \| E_{[0,4^{-1}KM^{-1}]\times[0,1] \times [0,1] }\tilde{f} \|_{L^p(B_{R'})},
\end{split}
\end{equation}
where $\tilde{f}(\xi_1,\xi_2,\xi_3):=f(4\xi_1/K,\xi_2,2\xi_3/K^{1/2})$ and the implicit constant is absolute.
In order to apply the induction hypothesis \eqref{200902e7.24}, we cut $B_{R'}$ into smaller balls $B_{R'/2}$, and bound \eqref{200902e7.32} by 
\begin{equation}\label{200902e7.33}
\begin{split}
       C K^{\frac12(-3+\frac{9}{p})}K^{\frac{3}{2p}}D_2(4M/K,1,R'/2)\|{f}\|_{L^p}.
\end{split}
\end{equation}
Now if we are in the case $M\ge K^2$, then $4M/K\ge K$ and we can apply the induction hypothesis \eqref{200902e7.24}, and bound the last expression by 
\begin{equation}
\begin{split}
       C C_p K^{\frac12(-3+\frac{9}{p})}K^{\frac{3}{2p}}M^{-\epsilon} K^{\epsilon}\|{f}\|_{L^p}.
\end{split}
\end{equation}
If we are in the case $M\le K^2$, then we simply use \eqref{domination} and \eqref{domination2}, and the induction hypothesis \eqref{200903e7.18} for the constant $D_1$. This allows us to bound \eqref{200902e7.33} by 
\begin{equation}
    C C_p K^{\frac12(-3+\frac{9}{p})}K^{\frac{3}{2p}}\|{f}\|_{L^p}.
\end{equation}
In either case, we can see that if we choose $\epsilon$ to be small enough, depending on how close $p$ is to 4, we always have 
\begin{equation}
    \eqref{200902e7.33}\le C C_p K^{-\epsilon} M^{-\epsilon} \|f\|_p. 
\end{equation}
We apply this bound to \eqref{beforerescaling} and see that the left hand side there can be bounded by 
\begin{equation}
\begin{split}
  (D_2(KM/2,1,R')+K^{200}M^{-\epsilon}+ C C_p K^{-\epsilon} M^{-\epsilon} \big)
        \|f\|_{L^p}.
\end{split}
\end{equation}
We apply to the first term the induction hypothesis \eqref{d2bound_zzz_dd} and obtain 
\begin{equation}
    C\Big( 
    (K/2)^{-\epsilon}
    C_{p}M^{-\epsilon} + K^{200}M^{-\epsilon} +K^{-\epsilon}C_{p}M^{-\epsilon} \Big)\|f\|_{L^p}.
\end{equation}
Since $K$ is large and $R'$ is also large compared to $K$, we can further bound the last display by $C_p  M^{-\epsilon} \|f\|_p$. This finishes the proof of \eqref{200902e7.27} and therefore closes the induction. 
\end{proof}

	\bibliography{reference}{}
	\bibliographystyle{alpha}

\medskip

\medskip

\noindent Department of Mathematics, University of Wisconsin-Madison\\
\emph{Email address}: 
shaomingguo@math.wisc.edu
\\

\noindent Department of Mathematics, University of Wisconsin-Madison\\
\emph{Email address}: coh28@wisc.edu

\end{document}